\crefname{equation}{}{}
\theoremstyle{plain}
\newtheorem{thm}{Theorem}
\newtheorem{prop}[thm]{Proposition}
\newtheorem{lem}[thm]{Lemma}
\newtheorem{cor}[thm]{Corollary}
\theoremstyle{definition}
\newtheorem{dfn}[thm]{Definition}
\begin{document}


\title{Time Warps, from Algebra to Algorithms}
%
%
\author{Sam van Gool\inst{1} \and Adrien Guatto\inst{1} \and George Metcalfe\inst{2}\thanks{Supported by Swiss National Science Foundation grant 200021$\_$165850.} \and Simon Santschi\inst{2}}
\authorrunning{S. van Gool et al.}
%
\institute{IRIF, Universit{\'e} de Paris, France\\
\email{\{guatto,vangool\}@irif.fr}
\and Mathematical Institute, University of Bern, Switzerland\\
\email{\{george.metcalfe,simon.santschi\}@math.unibe.ch}}
\maketitle


\begin{abstract}


Graded modalities have been proposed in recent work on programming languages as a general framework for refining type systems with intensional properties. In particular, continuous endomaps of the discrete time scale, or~\emph{time warps}, can be used to quantify the growth of information in the course of program execution. Time warps form a complete residuated lattice, with the residuals playing an important role in potential programming applications. In this paper, we study the algebraic structure of time warps, and prove that their equational theory is decidable, a necessary condition for their use in real-world compilers. We also describe how our universal-algebraic proof technique lends itself to a constraint-based implementation, establishing a new link between universal
algebra and verification technology.

\keywords{Residuated lattices \and Universal algebra \and Decision procedures \and Graded modalities \and Type systems \and Programming languages.}

\end{abstract}



\section{Introduction}\label{section:introduction}

Program types are almost as old as programs themselves.
Their initial role was to allow compilers to determine data sizes at compilation
time, e.g., distinguishing machine integers from double precision
numbers~\cite{Fortran-1957}.
Type system research has developed tremendously since these humble beginnings,
benefiting from close connections to logic~\cite{Howard-1980}.
For example, dependent types are expressive enough to serve as
specification languages for program results~\cite{Agda,Coq}.

Another line of research into type systems aims to classify not only~\emph{what}
programs compute, but also~\emph{how} they do so.
Such type systems describe the~\emph{effect} of a program---e.g., which
parts of memory it modifies~\cite{GiffordLucassen-1988}---or the
\emph{resources} it requires---e.g., how long it takes to
run~\cite{GhicaSmith-2014}.
Recently, \emph{graded
  modalities}~\cite{FujiiKatsumataMellies-2016,GaboardiKatsumataOrchardBreuvartUustalu-2016}
have emerged as a unified setting for describing effect- and resource-annotated
types.
A graded modality~$\Box$ allows programmers to form a new type~$\Box_f A$ from a
type~$A$ and a~\emph{grading}~$f$.
The meaning of~$\Box_f A$ depends on the system at hand, but can generally
be understood as a modification of~$A$ that includes the behavior prescribed by~$f$.

In many cases, gradings come equipped with an ordered algebraic structure
that is relevant for programming applications.
Most commonly, they form a monoid whose binary operation corresponds to a notion
of composition such that~$\Box_{gf} A$ is related to~$\Box_f \Box_g A$.
It is also often the case that gradings can be ordered by some sort
of~\emph{precision} ordering along which the graded modality acts
contravariantly.
That is, we have a~\emph{generic} program of type~$\Box_g A \to \Box_f A$ if~$f \le g$, 
allowing us to freely move from more to less precise types.
As a consequence, the structure of this ordering is reflected by the
operations available on types; for example, when the infimum of~$f$ and~$g$
exists, it permits the conversion of two values of types~$\Box_f A$ and~$\Box_g B$ into
a single value of type~$\Box_{f \mt g} (A \times B)$.

The additional flexibility and descriptive power gained by adopting graded
modalities in a programming language comes at a price, however.
The language implementation must now be able to manipulate gradings in various
ways; in particular, it should be able to decide the ordering between gradings in order to
distinguish between well-typed and ill-typed programs.
In this paper, we address this issue for a specific class of gradings known as~\emph{time warps}: sup-preserving functions on $\som = \om \cup \{ \om \}$, or, equivalently, monotonic functions $f \colon \som \to \som$ satisfying $f(0)=0$ and $f(\om) = \bigvee \{ f(n) \mid n \in \om \}$~\cite{Guatto-2018}.
Informally, time warps describe the growth of data along program execution.
In this setting, any type~$A$ describes a family of sets~$(A_n)_{n\in\om}$, where~$A_n$ is the set of values classified by~$A$ at execution step~$n$.
The type~$\Box_f A$ classifies the set of values of~$A_{f(n)}$ at step~$n$.
This typing discipline generalizes a long line of works on programming languages
for embedded systems~\cite{CaspiPouzet-1996} and type theories with modal
recursion
operators~\cite{Nakano-2000,BirkedalMogelbergSchwinghammerStovring-2012}.

Let us denote the set of time warps by $\Warp$. Then~$\langle \Warp,\circ,\id\rangle$ is a monoid, where $fg \defeq f\circ g$ denotes the composition of $f,g\in\Warp$, and $\id$ is the identity function. Moreover, equipping $\Warp$ with the pointwise order, defined by
\[
f\le g\: :\Longleftrightarrow \enspace f(p)\le g(p)\text{ for all }p\in\som,
\]
yields a complete distributive lattice $\langle \Warp,\mt,\jn\rangle$ satisfying, for all $f,g_1,g_2,h\in \Warp$, 
\[
f(g_1\jn g_2)h = fg_1h\jn fg_2h\:\text{ and }\:f(g_1\mt g_2)h = fg_1h\mt fg_2h,
\]
with a least element  $\bot$ that maps all $p\in\som$ to $0$, and a greatest element $\top$ that maps all $p\in\som{\setminus}\{0\}$ to $\om$. Note that the operation $\circ$ is a double quasi-operator on this lattice in the sense of~\cite{GP07b,GP07a}, and that the structure $\langle \Warp,\mt,\jn,\circ,\id\rangle$ belongs to the family of unital quantales of sup-preserving functions on a complete lattice studied in~\cite{San20}. 

The monoidal structure of time warps plays the expected role in programming applications. In particular,~$\Box_{gf} A$ and~$\Box_f \Box_g A$ are isomorphic, as are~$\Box_\id A$ and~$A$. However, time warps also admit further additional algebraic structure of interest for programming. Since they are sup-preserving, there exist binary operations $\ld,\rd$ on $\Warp$, called {\em residuals},  satisfying for all $f,g,h\in\Warp$,
\[
f\le h\rd g \iff fg\le h \iff g\le f\ld h.
\]
From a programming perspective, residuals play a role similar to that of weakest
preconditions in deductive verification.
The type~$\Box_{h \rd g} A$ can be seen as the~\emph{most general} type~$B$
such that~$\Box_h A$ can be sent generically to~$\Box_g B$.
Similarly,~$f \ld h$ is the most general~(largest) time warp~$f'$ such
that~$\Box_h A$ can be sent generically to~$\Box_{f'} \Box_f A$.
Such questions arise naturally when programming in a modular
way~\cite{Guatto-2018}, justifying the consideration of residuated structure in
gradings.

The algebraic structure $\WarpA = \langle \Warp,\mt,\jn,\circ,\ld,\rd,\id,\bot,\top\rangle$, referred to here as the \emph{time warp algebra}, belongs to the family of (bounded) residuated lattices, widely studied as algebraic semantics for substructural logics~\cite{BT03,GJKO07,MPT10}. The main goal of this paper is to prove the following theorem, a necessary condition for the use of time warps in real-world compilers:

\begin{thm}\label{theorem:main_result}
The equational theory of the time warp algebra $\WarpA$ is decidable.
\end{thm}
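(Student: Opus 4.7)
Since $s = t$ is equivalent to $s \le t$ and $t \le s$, it suffices to decide inequalities $s \le t$ in $\WarpA$. The overall plan is to translate such an inequality into a satisfiability problem over $\som$ that involves only finitely many constraints, and then to solve it using a decision procedure for the ordered structure of $\som$.

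The first step is to extract a pointwise description of each operation. The operations $\circ$, $\mt$, and $\jn$ act on a point $p \in \som$ in the obvious way, while the residuals, being determined by the universal property they satisfy, admit the characterizations $(g \ld h)(p) = \sup\{v \in \som \mid g(v) \le h(p)\}$ and $(g \rd h)(p) = \inf\{g(q) \mid h(q) \ge p\}$. Using these identities, I would associate to each subterm $u$ of $s$, $t$ and each evaluation point $p$ a constraint variable $x_{u,p}$ ranging over $\som$, and relate these variables by the equations and order constraints induced by the pointwise formulas, introducing existential or universal Skolem witnesses for the residuals. Satisfiability of the resulting system would then be equivalent to failure of $s \le t$, with the witnessing assignment of variables to time warps reconstructed from the values of the $x_{u,p}$.

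The crucial step is to prove a \emph{small-model} property that bounds the set of points that need to be considered. The plan is to close a base set, containing at least $0$ and $\om$, under the Skolem functions arising from the residuals and compositions occurring in $s$ and $t$, to bound the size of this closure by the syntactic complexity of the terms, and to conclude that a counterexample, if one exists, is already witnessed on this finite set. Once this is in place, the decision procedure reduces to solving a finite constraint system over the decidable linear order $\som$, and this is precisely what underpins the constraint-based implementation alluded to in the abstract.

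The main obstacle will be controlling the residuals. Because $(g \ld h)(p)$ and $(g \rd h)(p)$ involve quantification over all of $\som$, a residual nested inside a composition can couple the value of a term at $p$ with values of its arguments at points arbitrarily far above or below, so confining witnesses to a finite set is not obvious. I expect the argument to exploit the sup-preservation condition defining time warps in order to transfer information between the finite part $\om$ and the apex $\om$, and to require a case analysis separating bounded time warps (eventually constant on $\om$) from unbounded ones ($f(\om) = \om$). Getting this witness-extraction right, in a form that is both algebraically transparent and sufficiently explicit to drive an implementation, is where the bulk of the work will lie.
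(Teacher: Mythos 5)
Your high-level plan---reduce to inequalities, read off pointwise descriptions of the operations, replace each subterm-at-a-point by a constraint variable, close under witnesses for the residuals, and decide the resulting finite system over the ordered structure of $\som$---is essentially the strategy the paper follows. But the proposal stops where the real work begins, and there are three substantive gaps.

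First, you skip the normal-form reduction. The paper does not work directly with $g \ld h$ and $g \rd h$: it first rewrites every term (Lemmas~\ref{lemma:distributivity} and~\ref{lemma:reductiontounary}, Theorem~\ref{theorem:simpleform}) into a meet of joins of \emph{basic terms} built only from $\circ$, $\id$, $\bot$ and the three \emph{unary} residuals $\rl{t} = \id \rd t$, $\rr{t} = t\ld\id$, $\ro{t} = \top \ld t$. The point of this reduction is precisely the thing you identify as the ``main obstacle'': the binary residual $(g\ld h)(p) = \sup\{v \mid g(v)\le h(p)\}$ couples $g$ and $h$ and is hard to Skolemize in a bounded way, whereas $\rr{f}(n)$ is pinned down by just two samples of $f$, namely $f(\rr{f}(n))\le n < f(\rr{f}(n){+}1)$ (Lemma~\ref{lemma:op-r}). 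This two-point characterization is what makes the closure finite, so without this step your ``small-model'' argument has no obvious reason to terminate.

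Second, the closure must be syntactic, not semantic. You propose to ``close a base set, containing at least $0$ and $\om$, under the Skolem functions.'' The witnesses are not fixed functions on $\som$---they depend on the unknown time warps, which is the very thing you are trying to construct. The paper instead defines a purely syntactic \emph{sample} language ($\kappa$, $\s{t}{\alpha}$, $\suc(\alpha)$, $\pre(\alpha)$, $\last(t)$) and a rewriting relation $\leadsto$ whose reflexive-transitive closure produces the saturated set; proving this saturation is finite (Lemma~\ref{lemma:finitesaturation}) requires a genuine argument, carried out in Appendix~A.1 via a non-trivial substitution-and-counting lemma. You flag this as ``where the bulk of the work will lie,'' which is right, but no argument is offered.

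Third, and most critically, a satisfying assignment of values to the constraint variables need not extend to a valuation by time warps. The bulk of Section~\ref{section:diagrams} is the definition of \emph{diagrams}---prediagrams satisfying the $\sim$twenty soundness conditions \Crefrange{c:mon}{c:l-last-value-finite}---together with Lemmas~\ref{lemma:nonempty} and~\ref{lemma:fundamental}, which show that any diagram extends to genuine time warps that strongly extend the partial data (the ``strong'' part tracks the $\last(t)$ markers, which is exactly your bounded/unbounded case split). Without writing down the analogue of these conditions and proving the extension lemma, solving the constraint system tells you nothing about $\WarpA$: you get a function on a finite poset, not a counterexample valuation. So the proposal is the right outline, but the normal-form reduction, the finiteness of the syntactic saturation, and the diagram-to-valuation extension argument are all missing, and these constitute the entire technical content of the theorem.
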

\noindent
A {\em time warp term} is a member of the term algebra over a countably infinite set of variables of the algebraic language with binary operation symbols $\mt,\jn,\circ,\ld,\rd$, and constant symbols $\id,\bot,\top$, and a {\em time warp equation} consists of an ordered pair of terms $s,t$, denoted by $s\eq t$. Let $s \le t$ denote the equation $s\mt t\eq s$, noting that $\WarpA\models s\eq t$ if, and only if, $\WarpA\models s\le t$ and $\WarpA\models t\le s$, and, by residuation, $\WarpA\models s\le t$ if, and only if, $\WarpA\models\id\le t\rd s$. Clearly, to prove \Cref{theorem:main_result}, it will suffice to provide an algorithm that decides $\WarpA\models\id\le t$ for any time warp term $t$.


\subsection*{Overview of the proof of \Cref{theorem:main_result}}

We prove \Cref{theorem:main_result} by describing an algorithm with the following behavior:
 
\begin{newlist}

\item[]		{\bf Input.}	A time warp term $t$ in the variables $x_1, \dots, x_k$.

\item[]		{\bf Output.} If $\WarpA \models id \leq t$, the algorithm returns `{\tt Valid}'; if $\WarpA \not\models \id \leq t$, the algorithm returns `{\tt Invalid} at $(\hat{f}_1,\dots,\hat{f}_k,p)$' for some $p \in \som$ and finite descriptions $\hat{f}_1,\dots,\hat{f}_k$ of time warps $f_1,\dots,f_k$, such that $\sem{t}(p) < p$, where $\sem{t}$ is the time warp obtained from $t$ by mapping each $x_i$ to $f_i$.
\end{newlist}

\noindent
We now give a high-level overview of the three main steps of the algorithm; the details and the proof of its correctness will occupy us for the rest of the paper. 


\subsubsection*{I. Pre-processing into a disjunction of basic terms.}

In \Cref{section:normalform}, we show how to effectively obtain for any time warp term $t$, a time warp term
\[
t' \defeq \bigwedge_{i=1}^m \bigvee_{j=1}^{n_i} t_{i,j},
\]
such that $\WarpA \models t \approx t'$, where each $t_{i,j}$ is a \emph{basic term}, constructed using $\circ$, $\id$, $\bot$, and the defined operations $\rl{s}\defeq \id\rd s$, $\rr{s}\defeq s\ld\id$, and $\ro{s}\defeq \top\ld s$  (Theorem~\ref{theorem:simpleform}). Since  $\WarpA \models \id \leq t$ if, and only if, $\WarpA \models \id \leq \bigvee_{j=1}^{n_i} t_{i,j}$ for each $i\in\{1,\dots,m\}$, our task is reduced to giving an algorithm with the required behavior for terms of the form $t_1 \jn \cdots \jn t_n$, where each $t_i$ is a basic term. Once we have an algorithm that solves this case, we can run it for each of the $m$ conjuncts of $t'$ in turn, returning `{\tt Invalid} at $(\hat{f}_1,\dots,\hat{f}_k,p)$' whenever this is the result of one of these runs, and otherwise `{\tt Valid}'. 


\subsubsection*{II. Finitary characterization through diagrams.} The crucial step in our algorithm is the finitary characterization of `potential counterexamples' for an equation of the form $\id \leq t_1 \jn \cdots \jn t_n$, where each $t_i$ is a basic term. Our main tool for providing these finitary characterizations is the notion of a {\em diagram}.\footnote{The name `diagram' recalls a similar concept used to prove the decidability of the equational theory of lattice-ordered groups in~\cite{HM79}.} 

Let us give an example to illustrate the basic idea. To falsify the equation $\id\le xy\rl{x} \jn \rl{y}$ in $\WarpA$, it suffices to find time warps $f_x$ and $f_y$, and an element $p \in \som$, such that $(f_x \circ f_y \circ\rl{f}_x)(p)<p$ and $\rl{f}_y(p)<p$. Although time warps are, as functions on $\som$, infinite objects, only \emph{finitely many} of the values of $f_x$ and $f_y$ are relevant for falsifying the equation. Moreover, an upper bound for the number of values required for such a counterexample can be computed. The condition  $(f_x \circ f_y \circ\rl{f}_x)(p)<p$ is `unravelled'  by stating that there exist $\alpha_1, \alpha_2, \alpha_3\in\som$  such that $\alpha_3 < p$, where $\alpha_1 \defeq \rl{f}_x(p)$, $\alpha_2 \defeq f_y(\alpha_1)$, and $\alpha_3 \defeq f_x(\alpha_2)$. More formally, using a `time variable' $\kappa$ to refer to the value $p$, we build a finite {\em sample set} $\Gamma_1 \supseteq\{\kappa, \rl{x}[\kappa], y[\rl{x}[\kappa]], x[y[\rl{x}[\kappa]]]\}$, where $\Gamma_1$ is `saturated' with extra conditions used to describe, e.g., the behavior of $\rl{f}_x$ at relevant values. Similarly, we obtain a finite saturated sample set $\Gamma_2 \supseteq\{\kappa,\rl{y}[\kappa]\}$ for the condition  $\rl{f}_y(p)<p$.  The problem of deciding if there exists a counterexample to  $\id\le xy\rl{x} \jn \rl{y}$ then becomes the problem of deciding if there exists a suitable function $\delta \colon \Gamma_1\cup\Gamma_2 \to \som$ satisfying $\delta(x[y[\rl{x}[\kappa]]]) < \delta(\kappa)$ and $\delta(\rl{y}[\kappa]]]) < \delta(\kappa)$. In particular, $\delta$ should determine partial sup-preserving functions $\hat{f}_x$ and $\hat{f}_y$ on $\som$ satisfying $\hat{f}_x(\delta(\alpha)) = \delta(x[\alpha])$ for all $x[\alpha]\in\Gamma_1\cup\Gamma_2$, and $\hat{f}_y(\delta(\alpha)) = \delta(y[\alpha])$ for all $y[\alpha]\in\Gamma_1\cup\Gamma_2$.

Clearly, not every function $\delta$ from a saturated sample set to $\som$ extends to a valuation in $\WarpA$; e.g., if $\delta(\kappa) = 0$, then we must also have $\delta(x[\kappa]) = 0$. Moreover, although time warp equations in the residual-free language can be decided by considering an algebra of sup-preserving functions on a {\em finite} totally ordered set, this is not the case for the full language.\footnote{Indeed, the equational theory of the time warp algebra without residuals coincides with the equational theory of distributive lattice-ordered monoids~\cite{CGMS21}, but an elegant (finite) axiomatization of the equational theory in the full language is not known.} \Cref{section:diagrams} develops a general theory that precisely characterizes the functions---called {\em diagrams}---that extend to valuations and can be used to falsify a given equation.  This allows us to prove that there exists a counterexample to $\id \leq t_1 \jn \cdots \jn t_n$ if, and only if, there exists a diagram $\delta\colon\Gamma\to\som$ satisfying $\delta(\kappa)>\delta(t_i[\kappa])$ for each $i\in\{1,\dots,n\}$, where $\Gamma$ is the finite saturated sample set extending $\{t_1[\kappa],\dots,t_n[\kappa]\}$ (\Cref{theorem:main-diagram-theorem}). 


\subsubsection*{III. Encoding as a satisfiability query.} In the last step of the algorithm, described in \Cref{section:logic}, we use the decidability of the satisfiability problem in the first-order logic of natural numbers with the natural ordering and successor. More precisely, we show that the existence of a diagram in Theorem~\ref{theorem:main-diagram-theorem} can be encoded as an existential first-order sentence in that signature. Concretely, our algorithm constructs a quantifier-free formula which is satisfiable in the structure $(\mathbb N, \leq, S,0)$ if, and only if, there exists a diagram as specified by Theorem~\ref{theorem:main-diagram-theorem}. Moreover, a satisfying assignment can be converted into a valuation into $\WarpA$  that provides a counterexample to the equation $\id \leq t_1 \jn \cdots \jn t_n$.





\section{A normal form for time warps}\label{section:normalform}

The main aim of this section is to provide a normal form for time warp terms. Our first step is to provide a more precise description of the left and right residuals of time warps. Note that to prove that two time warps are equal, it suffices to show that they coincide on every non-zero natural number, since for any time warp $f$, it is always the case that $f(0)=0$ and $f(\om)=\bigvee\{f(n)\mid n\in\om\}$.

\begin{lem}\label{lemma:residuals}
For any time warps $f,g$ and $p\in\som$,
\begin{newlist}
\item[\rm (a)]
$(f\ld g)(p) =
\begin{cases}
0 & \text{if }p=0;\\
\bigvee\{q\in\som\mid f(q)\le g(p)\} &  \text{if }p\in\om{\setminus}\{0\};\\
\bigvee\{q\in\som\mid(\exists m\in\om)(f(q)\le g(m))\} &  \text{if }p=\om\\
\end{cases}$
\item[\rm (b)] $(g\rd f)(p) = \bigwedge g[\{q\in\som\mid p\le f(q)\}]$.
\end{newlist}
\end{lem}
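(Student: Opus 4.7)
My plan is to establish both formulas by writing down explicit candidate functions given by the right-hand sides and then verifying that they satisfy the defining residuation adjunctions
\[
fh \le g \iff h \le f \ld g \quad\text{and}\quad hf \le g \iff h \le g \rd f.
\]
For each residual this amounts to three subtasks: first, show that the candidate is itself a time warp (monotonic, mapping $0$ to $0$, and sup-preserving at $\om$); second, verify the corresponding inequality ($f h^* \le g$ or $h^\dagger f \le g$); and third, show that any time warp satisfying that inequality is pointwise dominated by the candidate.

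For part (a), I would define $h^*$ piecewise exactly as in the statement, with $h^*(0) = 0$ forced by the time warp axioms and the third-case value at $\om$ chosen precisely so that it coincides with $\bigvee_{m \in \om} h^*(m)$. Monotonicity is immediate from monotonicity of $g$, and the special case formula for $p = \om$ makes sup-preservation at $\om$ essentially definitional, since $\bigvee \{q \mid \exists m, f(q) \le g(m)\} = \bigvee_{m \in \om} \bigvee \{q \mid f(q) \le g(m)\}$. The inequality $f h^* \le g$ then follows by pushing $f$ through the supremum using its sup-preservation, and maximality uses that for any time warp $h$ with $fh \le g$, the value $h(p)$ lies in the set defining $h^*(p)$, with the top case reduced to the finite ones by sup-preservation of $h$.

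For part (b), I would define $h^\dagger(p) \defeq \bigwedge g[\{q \mid p \le f(q)\}]$ directly. Here $h^\dagger(0) = g(0) = 0$ because $0 \le f(q)$ for every $q$, and monotonicity in $p$ is immediate since the indexing set shrinks as $p$ grows. The inequality $h^\dagger f \le g$ is immediate since $q$ itself lies in $\{q' \mid f(q) \le f(q')\}$, and maximality follows from the chain $h(p) \le h(f(q)) \le g(q)$, valid whenever $p \le f(q)$, which forces $h(p)$ to be a lower bound of $g[\{q \mid p \le f(q)\}]$.

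I expect the main obstacle to be verifying sup-preservation of $h^\dagger$ at $\om$ in part (b), because the candidate is given by an infimum rather than a supremum. I plan to split on whether $\{q \mid f(q) = \om\}$ is empty: in the empty case $f$ is bounded on $\om$, so $\{q \mid n \le f(q)\}$ becomes empty for sufficiently large $n$ and thus $h^\dagger(n) = \om$ eventually; in the non-empty case, letting $q_0$ be the least $q \in \som$ with $f(q) = \om$, I would use monotonicity of $g$ (together with sup-preservation of both $f$ and $g$ in the subcase $q_0 = \om$) to show that $h^\dagger(n) \to g(q_0) = h^\dagger(\om)$ as $n$ grows through $\om$.
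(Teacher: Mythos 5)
Your proposal is correct and follows essentially the same approach as the paper: define the candidate function by the right-hand side, verify it is a time warp, establish the adjoint inequality using sup-preservation of $f$ (for (a)) or the reflexivity of the index set (for (b)), and show maximality by observing that any $h$ satisfying the residuation inequality lies pointwise in the defining set. The paper treats the sup-preservation of both candidates at $\om$ as ``clear''; your more careful case analysis in part (b) fills in that gap but does not constitute a different method.
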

\begin{proof}
(a) Let $h$ denote the function defined by cases on the right of the equation. Clearly, $h$ is monotonic and satisfies $h(0)=0$ and $h(\om)=\bigvee\{h(n)\mid n\in\om\}$, so $h$ is a time warp. Moreover, since $f$ preserves arbitrary joins, $fh\le g$, and hence $h\le f\ld g$. For the converse, just observe that for any $n\in \om{\setminus}\{0\}$, since $f((f\ld g)(n))\le g(n)$, also $(f\ld g)(n)\le h(n)$. So $h = f\ld g$.

(b) Let $h$ be the function defined by $h(p) \defeq  \bigwedge g[\{q\in\som\mid p\le f(q)\}]$. Clearly, $h$ is monotonic and satisfies $h(0)=0$ and $h(\om)=\bigvee\{h(n)\mid n\in\om\}$, so $h$ is a time warp. Moreover, $hf\le g$, and hence $h\le g\rd f$. For the converse, let $n\in \om{\setminus}\{0\}$. If $q\in\som$ satisfies $n\le f(q)$, then $(g\rd f)(n)\le(g\rd f)(f(q))\le g(q)$, and hence $(g\rd f)(n)\le \bigwedge g[\{q\in\som\mid n\le f(q)\}] = h(n)$. So $h = g\rd f$. 
\end{proof}


Next, we show that residuals of time warps distribute over joins and meets.

\begin{lem}\label{lemma:distributivity}
For any time warps $f,g,h$,
\begin{align*}
{\rm (a)} \enspace & f\ld(g\mt h) = (f\ld g)\mt (f\ld h) & {\rm (e)} \enspace & (g\mt h)\rd f = (g\rd f)\mt (h\rd f)\\
{\rm (b)} \enspace & (g\mt h)\ld f = (g\ld f)\jn (h\ld f) & {\rm (f)} \enspace & f \rd (g\mt h) = (f\rd g)\jn (f\rd h)\\
{\rm (c)} \enspace & f\ld (g\jn h) = (f\ld g)\jn (f\ld h) & {\rm (g)} \enspace & (g\jn h)\rd f = (g \rd f)\jn (h\rd f)\\
{\rm (d)} \enspace & (g\jn h) \ld f = (g\ld f)\mt (h\ld f) & {\rm (h)} \enspace & f\rd(g\jn h) = (f\rd g)\mt (f\rd h).
\end{align*}
\end{lem}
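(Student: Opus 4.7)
My plan is to separate the eight identities into standard residuated-lattice identities and identities specific to time warps. Parts (a), (d), (e), (h) are valid in any residuated lattice and follow directly from the defining adjunctions $k\le f\ld h \iff fk\le h \iff f\le h\rd k$, combined with the fact, stated in the introduction, that multiplication distributes over joins in each argument. For instance, $k\le f\ld(g\mt h)$ if and only if $fk\le g$ and $fk\le h$, if and only if $k\le(f\ld g)\mt(f\ld h)$, giving (a); the remaining three are handled analogously by showing that both sides satisfy the same universal property.

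For (b), (c), (f), (g), which are not valid in arbitrary residuated lattices, the inequality $\ge$ is routine from the (anti)monotonicity of residuals: $g\mt h\le g,h$ gives $\ge$ in (b) and (f), while $g,h\le g\jn h$ gives $\ge$ in (c) and (g). The content lies in the reverse inequality, which I would establish by a pointwise computation using the explicit formulas of \Cref{lemma:residuals}. The key feature of time warps exploited here is that $\som$ is a chain, so pointwise joins and meets coincide with maxima and minima, and that $\som$ is well-ordered, so every non-empty up-set has a minimum.

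For (b) and (c), fix $p\in\om{\setminus}\{0\}$. Since $\som$ is totally ordered, $f(q)\le(g\jn h)(p)$ is equivalent to $f(q)\le g(p)$ or $f(q)\le h(p)$, so the set $\{q\mid f(q)\le(g\jn h)(p)\}$ splits as a union of two sets whose supremum is the join of the two individual suprema, which is exactly $((f\ld g)\jn(f\ld h))(p)$, yielding (c); dually, $(g\mt h)(q)\le f(p)$ iff $g(q)\le f(p)$ or $h(q)\le f(p)$, giving (b). For (f) and (g), again fix $p\in\om{\setminus}\{0\}$; the set $A_g\defeq\{q\mid p\le g(q)\}$ is an up-set of $\som$ and, if non-empty, its minimum $q_g$ satisfies $\bigwedge g[A_g]=g(q_g)$ by monotonicity. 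For (f), $\{q\mid p\le(g\mt h)(q)\}=A_g\cap A_h$ has minimum $q_g\jn q_h$ when both are non-empty, so the left-hand side of the identity evaluates to $f(q_g\jn q_h)=f(q_g)\jn f(q_h)$, matching $(f\rd g)(p)\jn(f\rd h)(p)$; (g) is analogous, using that the same up-set $A\defeq\{q\mid p\le f(q)\}$ underlies both sides and $\bigwedge(g\jn h)[A]=g(\min A)\jn h(\min A)$.

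The main obstacle is not conceptual but bookkeeping: one must handle $p=0$ (trivially $0$ on both sides), $p=\om$ under the existentially quantified clause of \Cref{lemma:residuals}(a) (where the union-of-sets argument still applies after moving $\exists m$ across the disjunction), and the degenerate cases in which $A_g$ or $A_h$ is empty (so that the corresponding residual maps $p$ to $\om$, and the identity reduces to a $\top$-absorption). Each of these is dispatched by direct inspection.
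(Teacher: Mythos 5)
Your proposal is correct and follows essentially the same strategy as the paper: parts (a), (d), (e), (h) are dispatched as general residuated-lattice identities via the adjunction, and parts (b), (c), (f), (g) are verified pointwise using the explicit formulas of \Cref{lemma:residuals}, exploiting the chain structure of $\som$ to split the defining sets along a disjunction (for the left residuals) or to reduce the meet over an up-set to the value at its minimum (for the right residuals). The only cosmetic difference is that the paper invokes the remark preceding \Cref{lemma:residuals} to restrict attention to $n\in\om\setminus\{0\}$ and avoids spelling out the $p=\om$ and degenerate cases, whereas you treat them explicitly; both are fine.
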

\begin{proof}
Parts (a), (d), (e), and (h) hold in any residuated lattice (see, e.g.,~\cite{BT03}). For (b), consider any $n\in\om{\setminus}\{0\}$. Using  \Cref{lemma:residuals}(a), 
\begin{align*}
((g\mt h)\ld f)(n)
& = \bigvee\{q\in\som\mid (g\mt h)(q)\le f(n)\}\\
& = \bigvee\{q\in\som\mid g(q)\le f(n)\text{ or }h(q)\le f(n)\}\\
& = \bigvee\{q\in\som\mid g(q)\le f(n)\}\jn\bigvee\{q\in\som\mid h(q)\le f(n)\}\\
& =  ((g\ld f)\jn (h\ld f))(n).
\end{align*}
For (f), consider any $n\in\om{\setminus}\{0\}$. Using \Cref{lemma:residuals}(b), 
\begin{align*}
(f \rd (g\mt h))(n)
& = \bigwedge f[\{q\in\som\mid n\le (g\mt h)(q)\}]\\
& = \bigwedge f[\{q\in\som\mid n\le g(q) \text{ and } n\le h(q)\}]\\
& = \bigwedge f[\{q\in\som\mid n\le g(q)\}] \jn  \bigwedge f[\{q\in\som\mid n\le h(q)\}]\\
& =  ((g \rd f)\jn (h\rd f))(n).
\end{align*}
Parts (c) and (g) are proved similarly. 
\end{proof}


It follows from~\Cref{lemma:distributivity} that every time warp term is equivalent to a meet of joins of terms constructed using the operations $\circ$, $\ld$, $\rd$, $\id$, $\bot$, and $\top$. However, we can take this simplification process one step further by expressing the residuals of time warps in terms of their restrictions to certain unary operations. 

\begin{dfn}\label{definition:unaryops}
For any time warp $f$, let
\[
\rl{f}\defeq \id\rd f,\quad \rr{f}\defeq f\ld\id,\enspace\text{ and }\enspace\ro{f}\defeq \top\ld f.
\]
\end{dfn}

\begin{lem}\label{lemma:reductiontounary}
For any time warps $f,g$,
\begin{newlist}
\item[\rm (a)]	$f\ld g = \rr{f}g \jn \rr{(\top f)} \jn \ro{g}$
\item[\rm (b)]	$g\rd f = g\rl{f} \jn \ro{(\rl{f})}$.
\end{newlist}
\end{lem}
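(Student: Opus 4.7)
The plan is to reduce each identity to a pointwise check on $\om \setminus \{0\}$ and then unfold both sides using the explicit formulas from \Cref{lemma:residuals}. Since $\WarpA$ is closed under its operations, both sides of each equation are time warps, so, by the remark preceding \Cref{lemma:residuals}, it suffices to verify the identities at an arbitrary $p \in \om \setminus \{0\}$.

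For (a), I would fix such a $p$ and compute each summand on the right using \Cref{lemma:residuals}(a). This gives
\[
\rr{(\top f)}(p) = \bigvee\{q \in \som \mid f(q) = 0\}
\]
(independent of $p$, since $p < \om$), and $\ro{g}(p) = \om$ if $g(p) = \om$ and $0$ otherwise; while $\rr{f}(g(p))$ equals $0$ when $g(p) = 0$ and $\bigvee\{q \in \som \mid f(q) \le g(p)\}$ when $g(p) \in \om \setminus \{0\}$. A case split on whether $g(p)$ equals $0$, lies in $\om \setminus \{0\}$, or equals $\om$ then matches exactly one summand with $(f \ld g)(p) = \bigvee\{q \in \som \mid f(q) \le g(p)\}$, while the other two are subsumed: the ``compensation'' term $\rr{(\top f)}(p)$ is what covers the case $g(p) = 0$, and $\ro{g}(p)$ is what handles $g(p) = \om$.

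For (b), I would fix $p \in \om \setminus \{0\}$ and write $S_p \defeq \{q \in \som \mid p \le f(q)\}$. By \Cref{lemma:residuals}(b), $(g \rd f)(p) = \bigwedge g[S_p]$ and $\rl{f}(p) = \bigwedge S_p$. The main case split is on whether $S_p$ is empty. The one subtle step---which I expect to be the main technical obstacle---is to show that $S_p \ne \emptyset$ implies $\inf S_p < \om$: otherwise $S_p \subseteq \{\om\}$, forcing $p \not\le f(n)$ for all $n \in \om$ and hence $f(\om) = \bigvee_n f(n) < p$ by sup-preservation of $f$, contradicting $\om \in S_p$. Given this, $\inf S_p$ lies in $S_p$ (upward closed by monotonicity of $f$), so $(g \rd f)(p) = g(\inf S_p) = g(\rl{f}(p))$ while $\ro{(\rl{f})}(p) = 0$. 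If instead $S_p = \emptyset$, then $\rl{f}(p) = \om$ and all three of $(g \rd f)(p)$, $g(\rl{f}(p))$, and $\ro{(\rl{f})}(p)$ collapse to $\om$, again yielding the identity.

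Beyond this subtle step in (b) and the routine case bookkeeping in (a), the proof is essentially mechanical once the formulas in \Cref{lemma:residuals} are in hand.
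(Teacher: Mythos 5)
Your proof is correct and takes essentially the same route as the paper's: pointwise verification using \Cref{lemma:residuals} with a case split on the value of $g(p)$ in (a) and on the emptiness of $\{q \in \som \mid p \le f(q)\}$ in (b). The paper structures the argument slightly differently---proving the inequality $\ge$ as immediate and then $\le$ by cases---so it never needs your correctly-identified extra step that $\rl{f}(p) < \om$ when the set is nonempty, which is required in your direct pointwise-equality version to conclude $\ro{(\rl{f})}(p) = 0$ there.
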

\begin{proof}
For (a), note first that clearly $\rr{f}g \jn \rr{(\top f)} \jn \ro{g} \le f\ld g$. For the converse, consider any $n\in\om{\setminus}\{0\}$. If $g(n)=0$, then, by \Cref{lemma:residuals}(a),
\[
(f\ld g)(n)
= \bigvee\{q\in\som\mid f(q)\le 0\}
= \bigvee\{q\in\som\mid \top f(q)\le \id(n)\}
= \rr{(\top f)}(n).
\]
If $g(n)\in\om{\setminus}\{0\}$, then, by \Cref{lemma:residuals}(a),
\[
(f\ld g)(n)
= \bigvee\{q\in\som\mid f(q)\le g(n)\}
= \bigvee\{q\in\som\mid f(q)\le \id(g(n))\}
= \rr{f}(g(n)).
\]
Finally, if $g(n)=\om$, then, by \Cref{lemma:residuals}(a),
\[
(f\ld g)(n)
= \bigvee\{q\in\som\mid f(q)\le \om)\}
= \om
= \bigvee\{q\in\som\mid \top(q)\le\om)\}
=  \ro{g}(n).
\]
So $f\ld g = \rr{f}g \jn \rr{(\top f)} \jn \ro{g}$.

For (b), note first that clearly $g\rl{f} \jn \ro{(\rl{f})} \le g\rd f$. For the converse, consider any $n\in\om{\setminus}\{0\}$. If $\{q\in\som\mid n\le f(q)\}=\emptyset$, then, by \Cref{lemma:residuals}(b),
\[
(g\rd f)(n) = \bigwedge g[\emptyset] = \om= (\ro{(\rl{f})})(n).
\]
Otherwise, $\{q\in\som\mid n\le f(q)\}\neq\emptyset$ and, by \Cref{lemma:residuals}(b),
\[
(g\rd f)(n)
= \bigwedge g[\{q\in\som\mid n\le f(q)\}]
= g(\bigwedge \id[\{q\in\som\mid n\le f(q)\}])
= (g\rl{f})(n).
\]
So $g\rd f = g\rl{f} \jn \ro{(\rl{f})}$. 
\end{proof}

To gain a better understanding of these defined unary operations, we observe that \Cref{lemma:residuals} yields for any $n\in\om{\setminus}\{0\}$,
\begin{align*}
\ro{f}(n) & = \max\{m\in\som\mid \omega\le f(n)\}\\
\rr{f}(n)  & = \max\{m\in\som\mid f(m)\le n\}\\
\rl{f}(n)  & = \bigwedge \{m\in\som\mid n\le f(m)\}.
\end{align*}
The following lemmas collect some simple consequences of these observations.

\begin{lem} \label{lemma:op-o}
For any time warp $f$ and $n\in \om{\setminus}\{0\}$,
\begin{align*}
\ro{f}(n) = 0 &\iff f(n) < \om \\
\ro{f}(n) = \om &\iff f(n) = \om \\
\ro{f}(\om) = 0 &\iff f(k) < \om \text{ for all } k\in \om \\
\ro{f}(\om) = \om &\iff  f(k) = \om \text{ for some } k\in \om.
\end{align*}
\end{lem}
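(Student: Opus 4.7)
The plan is a direct computation using \Cref{lemma:residuals}(a), starting from the observation that $\top$ takes only the values $0$ and $\om$, so for any $r \in \som$ we have $\top(q) \le r$ if and only if either $q = 0$ or $r = \om$. The four equivalences then fall out by unravelling the definition $\ro{f}(p) = (\top \ld f)(p)$ according to the three clauses of \Cref{lemma:residuals}(a).

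Concretely, I would first handle the two cases with $n \in \om \setminus \{0\}$. Here \Cref{lemma:residuals}(a) gives
\[
\ro{f}(n) \;=\; \bigvee\{q \in \som \mid \top(q) \le f(n)\}.
\]
By the observation above, the set on the right is $\{0\}$ when $f(n) < \om$ and all of $\som$ when $f(n) = \om$, yielding $\ro{f}(n) = 0$ and $\ro{f}(n) = \om$ respectively. This settles the first two equivalences and, as a byproduct, shows that $\ro{f}$ takes only the values $0$ and $\om$ on $\om \setminus \{0\}$.

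For the two cases at $\om$, rather than invoking the third clause of \Cref{lemma:residuals}(a) directly, the quickest route is to exploit the fact that $\ro{f}$ is itself a time warp, so
\[
\ro{f}(\om) \;=\; \bigvee\{\ro{f}(n) \mid n \in \om\}.
\]
Since every $\ro{f}(n)$ lies in $\{0, \om\}$ by the first part, this supremum equals $\om$ precisely when some $\ro{f}(n) = \om$, i.e., when $f(k) = \om$ for some $k \in \om$, and equals $0$ precisely when every $\ro{f}(n) = 0$, i.e., when $f(k) < \om$ for all $k \in \om$. There is essentially no obstacle here; the only care needed is to note that $\ro{f}(0) = 0$ contributes nothing to the join, so the quantification can indeed range over all $n \in \om$ (including $0$) without affecting the conclusion.
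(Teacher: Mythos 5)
Your proof is correct and follows essentially the same route the paper intends (the paper only records the observation $\ro{f}(n) = \bigvee\{m \in \som \mid \om \le f(n)\}$ derived from Lemma~\ref{lemma:residuals}(a) and then states the lemma as a simple consequence, without spelling out the details). The only minor divergence is your handling of the $\om$ case: rather than invoking the third clause of Lemma~\ref{lemma:residuals}(a) directly, you appeal to the fact that $\ro{f}$ is itself a time warp, so $\ro{f}(\om) = \bigvee_{n\in\om}\ro{f}(n)$, and then reduce to the finite cases; this is a clean and equally valid shortcut, and your remark that $\ro{f}(0)=0$ (equivalently $f(0)=0$) makes the quantifier over $k\in\om$ versus $k\in\om\setminus\{0\}$ immaterial is exactly the right detail to flag.
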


\begin{lem}\label{lemma:op-r}
For any time warp $f$, $n\in \om{\setminus}\{ 0\}$, and $m\in \om$, 
\begin{align*}
\rr{f}(n) = m &\iff f(m) \leq n < f(m+1)  \\
\rr{f}(n) = \om &\iff f(\om) \leq n \\
\rr{f}(\om) = m &\iff  f(m+1) =  \om \text{ and } \rr{f}(k) = m \text{ for some } k\in \om  \\
\rr{f}(\om) = \om &\iff  f(\om) < \om \text{ or } (f(\om) = \om \text{ and } \forall k\in \om : f(k) < \om).
\end{align*}
\end{lem}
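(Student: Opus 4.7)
The plan is to apply \Cref{lemma:residuals}(a) with $g = \id$ in order to unpack $\rr{f} = f \ld \id$ according to \Cref{definition:unaryops}. This yields, for $n \in \om \setminus \{0\}$,
\[
\rr{f}(n) \;=\; \bigvee \{q \in \som \mid f(q) \le n\},
\]
and this supremum is in fact a \emph{maximum}: monotonicity of $f$ makes the set a downward-closed subset of $\som$, which (using also that $f(\om) = \bigvee\{f(k) \mid k \in \om\}$) must be either $\{0, \ldots, m\}$ for some $m \in \om$ or all of $\som$. For $n = \om$ the same lemma gives $\rr{f}(\om) = \bigvee \{q \in \som \mid f(q) < \om\}$, which in general need not be attained.

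From the max characterisation, the first two equivalences are essentially direct calculations: $\rr{f}(n) = m \in \om$ iff $m$ is the largest integer with $f(m) \le n$, equivalently $f(m) \le n < f(m+1)$; and $\rr{f}(n) = \om$ iff $\om$ belongs to the set, equivalently $f(\om) \le n$.

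For the third equivalence, if $\rr{f}(\om) = m \in \om$, then $m = \max \{q \in \som \mid f(q) < \om\}$, so $f(m) < \om$ and $f(m+1) = \om$. To exhibit the promised $k \in \om$ with $\rr{f}(k) = m$, I would pick $k = f(m)$ if $f(m) \ge 1$ (apply case 1), $k = 1$ if $f(m) = 0$ and $m \ge 1$ (case 1 applies, since $f(m) = 0 \le 1 < \om = f(m+1)$), and $k = 0$ if $m = 0$ (trivially $\rr{f}(0) = 0$). Conversely, $f(m+1) = \om$ forces every $q > m$ out of $\{q \in \som \mid f(q) < \om\}$, while $\rr{f}(k) = m$ for some $k \in \om$ forces $f(m) < \om$ (via case 1 when $k \ne 0$; when $k = 0$ we have $m = 0$ and $f(0) = 0 < \om$ trivially), so the maximum is exactly $m$.

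Finally, $\rr{f}(\om) = \om$ iff the set $\{q \in \som \mid f(q) < \om\}$ either contains $\om$ (that is, $f(\om) < \om$) or is cofinal in $\om$, which by monotonicity is equivalent to $f(k) < \om$ for every $k \in \om$; since the first condition implies the second, the two alternatives can be made disjoint by adjoining the clause $f(\om) = \om$ to the second, matching the stated disjunction. The main technical point throughout is the careful boundary handling at $k = 0$, $m = 0$, and $f(m) = 0$ in the third case, which requires the split into subcases in order to produce an existential witness on the right-hand side.
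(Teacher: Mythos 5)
Your proof is correct and follows exactly the route the paper intends: the paper states \Cref{lemma:op-r} as an immediate consequence of the explicit characterisation $\rr{f}(n) = \max\{m\in\som\mid f(m)\le n\}$ derived from \Cref{lemma:residuals}(a), and you unpack that characterisation case by case in the same spirit. Your extra attention to why the supremum is attained for $n\in\om\setminus\{0\}$ (ruling out the downward-closed set being exactly $\om$ using $f(\om)=\bigvee_k f(k)$) and to the degenerate witnesses for the third equivalence at $k=0$, $m=0$, $f(m)=0$ fills in precisely the details the paper leaves implicit, and is handled correctly.
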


\begin{lem} \label{lemma:op-l}
For any time warp $f$, $n\in \om{\setminus}\{ 0\}$, and $m\in \om$, 
\begin{align*}
\rl{f}(n) = m &\iff f(m-1) < n \leq f(m)  \\
\rl{f}(n) = \om &\iff f(\om) < n \\
\rl{f}(\om) = m &\iff f(m) = \om \text{ and } \rl{f}(k) = m \text{ for some } k\in \om  \\
\rl{f}(\om) = \om &\iff  f(\om) < \om \text{ or } (f(\om) = \om \text{ and } \forall k\in \om : f(k) < \om).
\end{align*}
\end{lem}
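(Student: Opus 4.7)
The plan is to exploit the explicit formula $\rl{f}(n) = \bigwedge\{m\in\som\mid n\le f(m)\}$ for $n\in\om{\setminus}\{0\}$, already noted just after \Cref{lemma:reductiontounary}, together with the time warp identity $\rl{f}(\om) = \bigvee\{\rl{f}(k)\mid k\in\om\}$. Each of the four equivalences then follows by a direct case analysis, structurally parallel to the proof of \Cref{lemma:op-r}.

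For the two equivalences concerning $n\in\om{\setminus}\{0\}$, I would unpack the infimum formula directly. The value $\rl{f}(n)$ is the least $q\in\som$ satisfying $n\le f(q)$, so $\rl{f}(n) = m\in\om$ holds exactly when $n\le f(m)$ and no strictly smaller element of $\som$ enjoys the same property, that is, $f(m-1) < n \le f(m)$; the boundary case $m=0$ is vacuously true on both sides since $n\ge 1 > 0 = f(0)$. For $\rl{f}(n) = \om$, the point is that the infimum evaluates to $\om$ exactly when the set $\{q\in\som\mid n\le f(q)\}$ is empty; this is where the identity $f(\om) = \bigvee_{k\in\om} f(k)$ is essential, since for finite $n$ the inequality $n\le f(\om)$ would already force some $f(k)\ge n$ with $k\in\om$, contradicting emptiness of the set.

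For the two equivalences concerning $p=\om$, I would use that $\rl{f}$ is itself a time warp, so $\rl{f}(\om) = \bigvee_{k\in\om} \rl{f}(k)$. A join of elements of $\som$ equals some $m\in\om$ precisely when all terms are $\le m$ and one of them attains $m$. The upper-bound condition translates into $f(m) = \om$, either via residuation directly ($\rl{f}(k)\le m$ for all $k\in\om$ iff $k\le f(m)$ for all $k\in\om$ iff $f(m) = \om$) or by specialising \Cref{lemma:residuals}(b) to obtain $\rl{f}(\om) = \bigwedge\{q\mid f(q)=\om\}$; combining with the attainment clause yields the third equivalence. For $\rl{f}(\om) = \om$, I would split into the two disjoint ways in which this join can be $\om$: either some $\rl{f}(k)$ with $k\in\om{\setminus}\{0\}$ is already $\om$, which by the previous equivalence amounts to $f(\om) < \om$; or all $\rl{f}(k)$ are finite but unbounded in $\om$, which occurs precisely when $f(\om) = \om$ together with $f(k) < \om$ for every $k\in\om$, since a single value $f(k_0) = \om$ would force $\rl{f}(n)\le k_0$ uniformly in~$n$ and hence bound the join.

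The main obstacle throughout is the bookkeeping around the absorbing element $\om$, in particular disentangling the two distinct regimes that produce $\rl{f}(\om) = \om$; once this case distinction is set up carefully, the remaining reasoning is a routine unpacking of the infimum characterisation of $\rl{f}$, completely analogous to what was done for $\ro{f}$ and $\rr{f}$ in \Cref{lemma:op-o,lemma:op-r}.
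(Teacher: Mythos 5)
Your proof is correct and is the natural argument: unpack the explicit formula $\rl{f}(n) = \bigwedge\{m\in\som\mid n\le f(m)\}$ for the first two clauses and use the time-warp identity $\rl{f}(\om) = \bigvee_{k\in\om}\rl{f}(k)$, together with monotonicity and the sup-preservation of $f$, for the last two. The paper leaves this lemma (like \Cref{lemma:op-o,lemma:op-r}) as a ``simple consequence'' of the displayed formula, so your proposal fills in exactly the intended reasoning; the only very minor loose spot is the phrasing around ``the infimum is $\om$ exactly when the set is empty'' (one must also rule out the set being $\{\om\}$, which your follow-up sentence does), but the argument as a whole is sound.
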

 
Note also that $\top = \rl{\bot}$. We call a time warp term {\em basic} if it is constructed using only $\circ$, $\id$, $\bot$, and the defined operations $\rl{t}\defeq \id\rd t$, $\rr{t}\defeq t\ld\id$, and $\ro{t}\defeq \top\ld t$. Our normal form theorem now follows, using \Cref{lemma:reductiontounary} to remove residuals from a time warp term, then~\Cref{lemma:distributivity} and other distributivity properties of $\WarpA$ to push out meets and joins, preserving equivalence in $\WarpA$ at every step.

\begin{thm}\label{theorem:simpleform}
There is an effective procedure that given any time warp term $t$, produces positive integers $m,n_1,\dots,n_m$ and a set of basic time warp terms $\{t_{i,j} \mid 1\le i\le m;\, 1\le j\le n_i\}$ satisfying $\WarpA\models t\eq \bigwedge_{i=1}^m \bigvee_{j=1}^{n_i} t_{i,j}$.
\end{thm}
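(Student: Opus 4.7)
The plan is to proceed by structural induction on the time warp term $t$, producing at each step an equivalent term in the desired meet-of-joins form $\bigwedge_i \bigvee_j b_{ij}$ with each $b_{ij}$ basic. The base cases ($t$ a variable, $\id$, or $\bot$) are immediate, and $t = \top$ is handled by the identity $\top = \rl{\bot}$ noted before the theorem. For the inductive cases, I need to show that this normal form is closed under each of $\mt$, $\jn$, $\circ$, $\ld$, and $\rd$.

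The cases of $\mt$ and $\jn$ are routine distributive-lattice manipulations: if $t_1 \eq \bigwedge_i A_i$ and $t_2 \eq \bigwedge_k B_k$ with each $A_i, B_k$ a join of basic terms, then $t_1 \mt t_2 \eq \bigwedge_i A_i \mt \bigwedge_k B_k$ (concatenating the meets) and $t_1 \jn t_2 \eq \bigwedge_{i,k}(A_i \jn B_k)$ (using $\jn$-distributivity over $\mt$). For $\circ$, the introduction records the identities $f(g_1 \jn g_2)h = fg_1h \jn fg_2h$ and $f(g_1 \mt g_2)h = fg_1h \mt fg_2h$, from which one obtains $t_1 t_2 \eq \bigwedge_{i,k}\bigvee_{j,l} a_{ij}c_{kl}$ when $t_1 \eq \bigwedge_i \bigvee_j a_{ij}$ and $t_2 \eq \bigwedge_k \bigvee_l c_{kl}$; each $a_{ij}c_{kl}$ is a composition of basic terms and hence basic.

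The interesting cases are the residuals. Applying \Cref{lemma:reductiontounary} reduces $t_1 \ld t_2$ to $\rr{t_1}t_2 \jn \rr{(\top t_1)} \jn \ro{t_2}$ and $t_2 \rd t_1$ to $t_2\rl{t_1} \jn \ro{(\rl{t_1})}$; via the already-settled cases of $\circ$ and $\jn$, it then suffices to bring $\rl{t}$, $\rr{t}$, and $\ro{t}$ into normal form whenever $t$ is. Specializing \Cref{lemma:distributivity} to $f = \id$ or $f = \top$ yields $\rl{(g\jn h)} = \rl{g}\mt\rl{h}$, $\rl{(g\mt h)} = \rl{g}\jn\rl{h}$, $\rr{(g\jn h)} = \rr{g}\mt\rr{h}$, $\rr{(g\mt h)} = \rr{g}\jn\rr{h}$, $\ro{(g\jn h)} = \ro{g}\jn\ro{h}$, and $\ro{(g\mt h)} = \ro{g}\mt\ro{h}$. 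Applied iteratively to $t \eq \bigwedge_i \bigvee_j a_{ij}$ these give $\rl{t} \eq \bigvee_i \bigwedge_j \rl{a_{ij}}$, $\rr{t} \eq \bigvee_i \bigwedge_j \rr{a_{ij}}$, and $\ro{t} \eq \bigwedge_i \bigvee_j \ro{a_{ij}}$. The last is already a meet of joins of basic terms, while the first two are joins of meets of basic terms, which can be converted to meet-of-joins form by iterated application of the dual distributive law $a \jn (b \mt c) = (a \jn b) \mt (a \jn c)$ valid in the distributive lattice $\WarpA$.

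The main obstacle I anticipate is careful bookkeeping: in the residual cases, the intermediate join-of-meets output produced by $\rl{\cdot}$ and $\rr{\cdot}$ must be re-expressed as a meet of joins before being plugged back into subsequent $\circ$ or $\jn$ constructions, so that the induction invariant is preserved. Since each elementary step invokes only identities already established in \Cref{lemma:reductiontounary} and \Cref{lemma:distributivity}, and every construction is explicit, the induction yields the required effective procedure.
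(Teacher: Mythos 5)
Your proposal is correct and takes essentially the same approach as the paper's (terse) proof: eliminate the binary residuals via \Cref{lemma:reductiontounary}, then push meets and joins outward using \Cref{lemma:distributivity} and the distributivity of $\circ$, organized here as a structural induction. The specializations of \Cref{lemma:distributivity} to $\rl{}$, $\rr{}$, $\ro{}$ and the observation that $\rl{}$ and $\rr{}$ produce join-of-meets (needing a DNF-to-CNF pass) are exactly the bookkeeping the paper leaves implicit.
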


\begin{cor}\label{corollary:simpleform}
The equational theory of $\WarpA$ is decidable if, and only if, there exists an effective procedure that decides for any finite non-empty set of basic time warp terms $\{t_1,\dots,t_n\}$ if $\WarpA\models \id\le t_1\jn\cdots\jn t_n$.
\end{cor}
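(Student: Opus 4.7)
The forward direction will be immediate: the inequality $\id \le t_1 \jn \cdots \jn t_n$ abbreviates the equation $\id \mt (t_1 \jn \cdots \jn t_n) \eq \id$, so any algorithm deciding the full equational theory of $\WarpA$ handles this particular form as an instance.

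For the backward direction, my plan is to chain two reductions that are already in hand. The first, noted in the introduction, is residuation: $\WarpA \models s \eq t$ iff both $\WarpA \models s \le t$ and $\WarpA \models t \le s$, and $\WarpA \models s \le t$ iff $\WarpA \models \id \le t \rd s$. Hence it suffices to decide, for an arbitrary time warp term $u$, whether $\WarpA \models \id \le u$. The second reduction is to invoke \Cref{theorem:simpleform}, which effectively produces basic terms $t_{i,j}$ with $\WarpA \models u \eq \bigwedge_{i=1}^m \bigvee_{j=1}^{n_i} t_{i,j}$. Since meets in a lattice distribute with inequalities in the expected way, $\WarpA \models \id \le \bigwedge_{i=1}^m a_i$ iff $\WarpA \models \id \le a_i$ for every $i \in \{1, \dots, m\}$, so the query reduces to the $m$ separate queries $\WarpA \models \id \le \bigvee_{j=1}^{n_i} t_{i,j}$ for basic terms $t_{i,j}$, each of which is decided by the assumed procedure. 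Combining the individual answers yields a decision procedure for $\WarpA \models \id \le u$, and hence for the full equational theory.

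I do not expect any real obstacle in this argument: all the substantive work has already been accomplished in \Cref{theorem:simpleform}, and the corollary merely assembles this normal-form theorem with the standard residuation reductions recalled in the introduction. The remainder of the paper will then focus on the genuinely hard task, namely producing an algorithm that handles the special form $\id \le t_1 \jn \cdots \jn t_n$ for basic $t_i$, to which this corollary justifies restricting attention.
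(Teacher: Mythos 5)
Your proposal is correct and matches the paper's intended argument: the paper leaves the corollary without an explicit proof because, exactly as you say, it is just the residuation reduction from the introduction (reducing $s \eq t$ to two inequalities, and $s \le t$ to $\id \le t \rd s$) chained with the normal form of Theorem~\ref{theorem:simpleform} and the observation that a meet lies above $\id$ iff each conjunct does. No gaps; this is the paper's own route, assembled correctly.
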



We conclude this section by introducing a further notion that will be useful for providing finitary characterizations of time warps.

\begin{dfn}
  For any time warp $f$, let
\[
  \lastw(f)
  \defeq
  \bigwedge
  \{
  p \in \som
  \mid
  f(p) = f(\om)
  \}.
\]
\end{dfn}

\noindent
Observe that $\lastw(f) < \om$ if, and only if,~$f$ is eventually constant, i.e., increases a finite number of times, and that $\lastw(f)$ can be defined equivalently in the language of time warps as $(\rl{f}f)(\omega)$. For future reference, we record the following easy consequences of this definition.

\begin{lem}\label{lemma:last-properties}
For any time warps $f,g$,
\begin{newlist}
\item[\rm (a)]	$\lastw(f g) = \om\iff (\lastw(f) = \om$ and~$\lastw(g) = \om)$
\item[\rm (b)]	$\lastw(f) = \om\iff\lastw(\rr{f}) = \om\iff\lastw(\rl{f}) = \om$.
\end{newlist}
\end{lem}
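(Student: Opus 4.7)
The plan is to first reformulate the condition $\lastw(h) = \om$ in a way that trivialises the subsequent case analysis. Since $h(\om) = \bigvee\{h(n) \mid n \in \om\}$ and the supremum of a subset of $\om$ is attained whenever it is itself a natural number, I would begin by establishing the equivalence
\[
\lastw(h) = \om \iff h(\om) = \om \text{ and } h(k) < \om \text{ for every } k \in \om.
\]
The forward direction rules out $h(\om) < \om$ (since then the sup would be realised by some $h(k)$) and rules out $h(k) = \om$ (since then $h(k) = h(\om)$); the reverse direction is immediate from the definition of $\lastw$.

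With this reformulation in hand, (a) becomes a direct chase. For the forward direction, assume $f(g(\om)) = \om$ and $f(g(k)) < \om$ for every $k \in \om$. Any $g(k) = \om$ would give $f(g(k)) \geq f(g(\om)) = \om$, and any $g(\om) < \om$ would be witnessed by some equality $g(\om) = g(k)$, again forcing $f(g(k)) = \om$; so $\lastw(g) = \om$. Then $f(\om) = f(g(\om)) = \om$, and if $f(n) = \om$ for some $n \in \om$, the non-eventual-constancy of $g$ supplies an $m$ with $g(m) \geq n$, contradicting $f(g(m)) < \om$; hence $\lastw(f) = \om$ as well. The reverse direction is immediate, since $\lastw(f) = \lastw(g) = \om$ makes $fg$ send $\om$ to $\om$ and each natural to a natural.

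For (b), everything is packaged in \Cref{lemma:op-r} and \Cref{lemma:op-l}. From the clause $\rr{f}(n) = \om \iff f(\om) \leq n$, one reads off that $\rr{f}(k) < \om$ for every $k \in \om$ is equivalent to $f(\om) = \om$; and under that hypothesis, the fourth clause of the same lemma reduces $\rr{f}(\om) = \om$ to $f(k) < \om$ for every $k \in \om$. Combined with the opening reformulation, this yields $\lastw(\rr{f}) = \om \iff \lastw(f) = \om$, and the corresponding equivalence for $\rl{f}$ follows by the same argument applied to \Cref{lemma:op-l}. No genuine obstacle is anticipated: the proof is a routine case analysis whose only substantive step is the reformulation of $\lastw(h) = \om$.
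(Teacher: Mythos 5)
Your proof is correct. The paper states this lemma without proof (calling it an easy consequence of the definition), and your argument — reformulating $\lastw(h)=\om$ as $h(\om)=\om$ together with $h(k)<\om$ for all $k\in\om$, then chasing cases and reading off $\rr{f}$, $\rl{f}$ from Lemmas~\ref{lemma:op-r} and~\ref{lemma:op-l} — is precisely the natural direct verification that the authors have in mind.
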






\section{Diagrams}\label{section:diagrams}

In this section, we define diagrams as finitary characterizations of `potential counterexamples' for equations of the form $\id\le t_1\jn\cdots\jn t_n$, where each $t_i$ is a basic time warp term. This definition is obtained by considering relevant properties of time warps assigned to variables in a refuting valuation, and it therefore follows easily that if $\WarpA\not\models\id \leq t_1 \jn \cdots \jn t_n$, then there exists a suitable refuting diagram. The more challenging direction is to show that every refuting diagram extends to a refuting valuation witnessing $\WarpA\not\models\id \leq t_1 \jn \cdots \jn t_n$.

Note first that, using \Cref{theorem:simpleform}, we may without loss of generality express validity in $\WarpA$ using a simplified language where the restricted residuals are taken as fundamental operations. Let $\TermV$ be a countably infinite set of~\emph{term variables}, with elements denoted by~$x,y,z$, etc.

\begin{dfn}
  A~\emph{basic term} belongs to the grammar
  \[
    \Term \ni
    t, u
    \Coloneqq
    x \mid tu \mid \ro{t} \mid \rl{t} \mid \rr{t}
    \mid \id \mid \bot.
  \]
\end{dfn}

We also define valuations and interpretations explicitly for basic terms.

\begin{dfn}
  A~\emph{valuation}~$\theta$ is a map~$\TermV \to \Warp$. The~\emph{interpretation} of a basic term~$t$ under~$\theta$, denoted by~$\sem{t}_\theta$, is the time warp defined inductively by
  \begin{mathpar}
    \sem{x}_\theta := \theta(x),
    
    \sem{t u}_\theta := \sem{t}_\theta \sem{u}_\theta,

    \sem{t^\star}_\theta := \sem{t}_\theta^\star\, \mbox{ for } \star \in \{ \mathsf{o},\mathsf{\ell}, \mathsf{r} \}.
  \end{mathpar}
\end{dfn}

\noindent
\Cref{corollary:simpleform} tells us that the equational theory of $\WarpA$ is decidable if, and only if, there exists an effective procedure that decides, for any finite set of basic terms $T$, if there exists a valuation $\theta$ and $p\in\som$ such that $\sem{t}_\theta(p) < p$ for all $t\in T$. To refer to this element $p$, we let  $\TimeV$ be a countably infinite set of~\emph{time variables} containing elements denoted by~$\kappa$,~$\kappa'$, etc, noting that in fact only one time variable will be required for the proofs in this paper. We now define a new language of `samples' that will be used to refer to values considered in a diagram.

\begin{dfn}\label{d:sample}
  A~\emph{sample} belongs to the grammar (where $t$ is any basic term)
  \[
    \Time \ni
    \alpha
    \Coloneqq
    \kappa \mid \s{t}{\alpha} \mid \suc(\alpha) \mid \pre(\alpha)
    \mid \last(t).
  \]
\end{dfn}

Although samples are purely syntactic, the notation is indicative of their intended meaning. Given an initial sample set $\{\s{t_1}{\kappa},\ldots,\s{t_n}{\kappa}\}$, obtained from the equation $\id\le t_1 \jn \cdots \jn t_n$, the idea is to `saturate' this set by adding further samples required to describe the existence of a counterexample.

\begin{dfn}\label{d:saturated}
  A sample set~$\Delta$ is called~\emph{saturated} if whenever~$\alpha \in \Delta$ and~$\alpha \leadsto \beta$, also~$\beta \in \Delta$, where~$\leadsto$ is the relation between samples defined by
  \begin{align*}
    \s{t}{\alpha}
    & \leadsto
    \alpha
    &
     \s{\ro{t}}{\alpha}
    & \leadsto
    \s{t}{\alpha}
    \\
    \suc(\alpha)
    & \leadsto
    \alpha
    &
     \s{\rr{t}}{\alpha}
    & \leadsto
    \s{t}{\s{\rr{t}}{\alpha}},
    \s{t}{\suc(\s{\rr{t}}{\alpha})}
    \\
     \pre(\alpha)
    & \leadsto
    \alpha
    &
     \s{\rl{t}}{\alpha}
    & \leadsto
    \s{t}{\s{\rl{t}}{\alpha}},
    \s{t}{\pre(\s{\rl{t}}{\alpha})}
    \\
     \s{tu}{\alpha}
    & \leadsto
    \s{t}{\s{u}{\alpha}}
    &
    \s{t}{\alpha}
    & \leadsto
    \s{t}{\last(t)}.
  \end{align*}
  The~\emph{saturation} of a sample set~$\Delta$ is 
  \[
    \sat{\Delta}
    \defeq
    \{
    \beta
    \mid
    \exists \alpha \in \Delta,
    \alpha \leadsto^* \beta
    \},
  \]
 where  $\leadsto^*$ denotes the  reflexive transitive closure of $\leadsto$.
\end{dfn}

A proof of the following result can be found in~\Cref{section:proof_finite_saturation}.

\begin{lem}\label{lemma:finitesaturation}
The saturation of a finite sample set is finite.
\end{lem}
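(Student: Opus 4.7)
The plan is to exhibit a finite set $F\supseteq\Delta$ closed under $\leadsto$, whence $\sat{\Delta}\subseteq F$ is finite.

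First, by induction on $\leadsto^{*}$, every basic term appearing in a sample of $\sat{\Delta}$ is a subterm of some basic term appearing in $\Delta$: inspection of each rule in \Cref{d:saturated} shows that every basic term on a right-hand side is a subterm of one on the corresponding left (for example, $\s{\rr{t}}{\alpha}\leadsto\s{t}{\s{\rr{t}}{\alpha}}$ introduces only $t$ and $\rr{t}$, both subterms of $\rr{t}$; the rule $\s{t}{\alpha}\leadsto\s{t}{\last(t)}$ introduces only $t$). Call the resulting finite subterm-closed set $\Sigma$. A similar analysis shows that the innermost atomic subsample (the ``base'') of any $\alpha\in\sat{\Delta}$ lies in the finite set $B=\{\kappa\}\cup\{\last(t):t\in\Sigma\}$ together with the bases of samples in $\Delta$. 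Every sample in $\sat{\Delta}$ then decomposes uniquely as $\omega_1\omega_2\cdots\omega_k(\gamma)$ with $\gamma\in B$ and each $\omega_i$ drawn from the finite alphabet $\{\s{t}{\cdot}:t\in\Sigma\}\cup\{\suc(\cdot),\pre(\cdot)\}$. It therefore suffices to bound the body length $k$ uniformly.

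The main obstacle is this length bound. Among the rules, only the composite decomposition $\s{tu}{\alpha}\leadsto\s{t}{\s{u}{\alpha}}$ and the growth rules $\s{\rr{t}}{\alpha}\leadsto\s{t}{\s{\rr{t}}{\alpha}},\s{t}{\suc(\s{\rr{t}}{\alpha})}$ (together with their $\rl$-analogs) can increase the body length, by one or two; the decomposition rules and $\s{\ro{t}}{\alpha}\leadsto\s{t}{\alpha}$ do not lengthen it, while $\s{t}{\alpha}\leadsto\s{t}{\last(t)}$ collapses it to length one. The crucial property is that every body-lengthening rule strictly reduces the syntactic size of the outermost basic term of the sample: the outermost $tu$, $\rr{t}$, or $\rl{t}$ is always replaced by a proper subterm $t$ (or $u$). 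Meanwhile, the $\last$ rule can ``restart'' the generation process, but only from a new sample in the finite set $\{\s{t}{\last(t)}:t\in\Sigma\}$.

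To conclude, by a careful bookkeeping over the finitely many possible seeds---namely the elements of $\Delta$ together with the canonical samples $\s{t}{\last(t)}$ for $t\in\Sigma$---and the fact that the outermost basic term of any sample lies in $\Sigma$ and hence has syntactic size bounded by $M:=\max_{t\in\Sigma}|t|$, one obtains an explicit uniform upper bound $K$ on the body length of every sample in $\sat{\Delta}$. The set $F$ of all samples with base in $B$, body operators drawn from the finite alphabet above, and body length at most $K$ is then finite, contains $\Delta$, and a direct inspection of each rule confirms that it is closed under $\leadsto$. Hence $\sat{\Delta}\subseteq F$, so the saturation is finite.
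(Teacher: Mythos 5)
Your plan — exhibit a finite $\leadsto$-closed superset $F$ of $\Delta$, built from a finite operator alphabet, a finite base set, and a uniform body-length bound $K$ — is a reasonable route, and your first two observations (subterm-closure and finiteness of the base set $B$) are correct. However, the step that carries essentially all the weight, ``by a careful bookkeeping \ldots one obtains an explicit uniform upper bound $K$ on the body length,'' is asserted rather than proved, and it is precisely where the difficulty of the lemma resides.

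The claim that ``every body-lengthening rule strictly reduces the syntactic size of the outermost basic term'' is true but does \emph{not} directly yield a body-length bound. The problem is the $\rr$- and $\rl$-rules: $\s{\rr{t}}{\alpha}\leadsto \s{t}{\s{\rr{t}}{\alpha}}$ not only adds a fresh layer $t$ but also \emph{retains} the full operator $\rr{t}$ one position deeper. Thus any additive potential such as ``sum of the sizes of the basic terms appearing in the body'' strictly \emph{increases} under this rule, so the naive decreasing-potential argument fails. Moreover, the intermediate rules $\s{t}{\alpha}\leadsto\alpha$ can expose an inner operator larger than the current outermost one, after which further growth is again possible; and $\suc$/$\pre$ insertions can interleave. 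The fact that all this stays bounded is true but requires a genuine invariant, not a one-line monotonicity claim. Your definition of $F$ also does not self-evidently give closure under $\leadsto$: a body of length exactly $K$ to which a growth rule still applies would escape $F$, so one must argue that growth rules never apply at the putative maximum length --- which again needs the detailed invariant.

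The paper closes this gap with a substitution mechanism $\gamma^\beta_\alpha$ and a quantity $\mu_\alpha(\ldots)$ counting samples whose every derivation passes through $\alpha$; the key Lemma A.2 then relativizes $\sat{\{\s{t}{\alpha},\ldots\}}$ to $\sat{\{\s{t}{\kappa},\ldots\}}$ plus a small correction term, allowing a clean induction on $t$ even in the self-referential $\rr{u}$ and $\rl{u}$ cases. To complete your argument you would need either to reproduce such a relativization/substitution lemma, or to exhibit a well-founded (not merely ``shrinking outermost term'') ranking on samples that actually decreases under \emph{all} the $\leadsto$ rules, neither of which your sketch does.
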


Let us fix, until after Definition~\ref{definition:diagram}, a saturated sample set $\Delta$.

\begin{dfn}\label{prediagram}
 A~\emph{$\Delta$-prediagram} is a map~$\delta\colon \Delta \to \som$.
\end{dfn}

We now give a list of conditions for a~$\Delta$-prediagram to be a~\emph{$\Delta$-diagram}.

\begin{dfn}
For $p\in \som$, let
\begin{align*}
p\ominus 1 & \defeq \begin{cases}
p-1 & \text{if } p\in \om\mathop{\setminus}\{ 0\} \\
p & \text{if } p\in \{ 0, \om\}
\end{cases},
&
p\oplus 1 & \defeq \begin{cases}
p+1 &\text{if } p \in \om \\
p &\text{if } p= \om
\end{cases}.
\end{align*}
\end{dfn}

\begin{dfn}
  \label{diagram-structural}
   A $\Delta$-prediagram~$\delta$ is called~\emph{structurally-sound} if
  \begin{align}
    \forall \s{t}{\alpha}, \s{t}{\beta} \in \Delta
    &, ~
    \delta(\alpha) \le \delta(\beta) \,\Rightarrow\, \delta(\s{t}{\alpha}) \le
    \delta(\s{t}{\beta})
    \label{c:mon}
    \\
    \forall \s{t}{\alpha} \in \Delta
    &, ~
    \delta(\alpha) = 0 \,\Rightarrow\, \delta(\s{t}{\alpha}) = 0
    \label{c:zero}
    \\
    \forall \pre(\alpha) \in \Delta
    &, ~
    \delta(\pre(\alpha)) = \delta(\alpha) \ominus 1
    \label{c:pre}
    \\
    \forall \suc(\alpha) \in \Delta
    &, ~
    \delta(\suc(\alpha)) = \delta(\alpha) \oplus 1
    \label{c:suc}
    \\
    \forall \s{t}{\alpha} \in \Delta
    &, ~
    \delta(\last(t)) \le \delta(\alpha)
    \,\Leftrightarrow\,
    \delta(\s{t}{\alpha}) = \delta(\s{t}{\last(t)})
    \label{c:last}
    \\
    \forall \s{t}{\last(t)} \in \Delta
    &, ~
    \delta(\last(t))=\om
    \,\Rightarrow\,
    \delta(\s{t}{\last(t)}) = \om
    \label{c:last2}.
  \end{align}
\end{dfn}

\begin{dfn}
  \label{diagram-logical}
  A $\Delta$-prediagram~$\delta$ is called~\emph{logically-sound} if
  \begin{align}
    \forall \s{\id}{\alpha} \in \Delta
    &, ~
    \delta(\s{\id}{\alpha}) = \delta(\alpha)
    \label{c:id}
    \\
    \forall \s{\bot}{\alpha} \in \Delta
    &, ~
    \delta(\last(\bot)) = 0
    \label{c:bot}
    \\
    \forall \s{t u}{\alpha} \in \Delta
    &, ~
    \delta(\s{t u}{\alpha}) = \delta(\s{t}{\s{u}{\alpha}})
    \label{c:prod}
    \\
    \forall \s{t u}{\last(t u)} \in \Delta
    &, ~
    \delta(\last(t u)) = \om
    \,\Rightarrow\,
    \delta(\last(t)) =  \delta(\last(u)) = \om
    \label{c:last-prod1}.
  \end{align}
\end{dfn}

\begin{dfn}
  \label{diagram-o}
   A $\Delta$-prediagram~$\delta$ is called~\emph{$o$-sound} if
  \begin{align}
    \forall \s{\ro{t}}{\alpha} \in \Delta
    &, ~
    \delta(\s{\ro{t}}{\alpha}) = 0 \,\text{ or }\, \delta(\s{\ro{t}}{\alpha}) = \om
    \label{c:o-values}
    \\
    \forall \s{\ro{t}}{\alpha} \in \Delta
    &, ~
    \delta(\alpha) < \om
    \Rightarrow
    (\delta(\s{\ro{t}}{\alpha}) = \om
    \Leftrightarrow
    \delta(\s{t}{\alpha}) = \om)
    \label{c:o-inf}
    \\
    \forall \last(\ro{t}) \in \Delta
    &, ~
    \delta(\last(\ro{t})) < \om
    \label{c:o-last-finite}
    \\
    \forall \s{t}{\alpha}, \s{\ro{t}}{\last(\ro{t})} \in \Delta
    &, ~
    (\delta(\s{\ro{t}}{\last(\ro{t})}) < \om \,\text{ and }\, \delta(\alpha) < \om)
    \,\Rightarrow\,
    \delta(\s{t}{\alpha}) < \om.
    \label{c:o-last-value-finite}
  \end{align}
\end{dfn}

\begin{dfn}
  \label{diagram-r}
   A $\Delta$-prediagram~$\delta$ is called~\emph{$r$-sound} if
  \begin{align}
    \forall \s{t}{\s{\rr{t}}{\alpha}} \in \Delta
    &, ~
    \delta(\s{t}{\s{\rr{t}}{\alpha}}) \le \delta(\alpha)
    \label{c:r-lower}
    \\
    \forall \s{\rr{t}}{\alpha}\in \Delta
    &,~
    (0<\delta(\alpha) < \om\,\text{ and }\,\delta(\s{\rr{t}}{\alpha}) < \om)
     \,\Rightarrow\, \delta(\alpha) < \delta(\s{t}{\suc(\s{\rr{t}}{\alpha}) } ) 
    \label{c:r-finite}
    \\
    \forall\s{\rr{t}}{ \last(\rr{t})} \in \Delta
    &, ~
    \delta(\last(\rr{t})) = \om
    \,\Rightarrow\,
    \delta(\last(t)) = \om
    \label{c:r-last-inf}
    \\
    \forall \s{\rr{t}}{\last(\rr{t})} \in \Delta
    &, ~
    \delta(\s{\rr{t}}{ \last(\rr{t})}) < \om
    \,\Rightarrow\,
    \delta(\s{t}{\suc(\s{\rr{t}}{ \last(\rr{t})})}) = \om.
    \label{c:r-last-value-finite}
  \end{align}
\end{dfn}

\begin{dfn}
  \label{diagram-l}
   A $\Delta$-prediagram~$\delta$ is called~\emph{$\ell$-sound} if
  \begin{align}
    \forall \s{t}{\s{\rl{t}}{\alpha}} \in \Delta
    &, ~
    \delta(\s{\rl{t}}{\alpha}) < \om
    \,\Rightarrow\,
    \delta(\alpha)  \le \delta(\s{t}{\s{\rl{t}}{\alpha}})
    \label{c:l-finite1}
    \\
    \forall \s{\rl{t}}{\alpha}\in \Delta
    &, ~
    (0 < \delta(\alpha) < \om \,\text{ and }\, \delta(\s{\rl{t}}{\alpha}) < \om) 
    \,\Rightarrow\, \delta(\s{t}{{\pre(\s{\rl{t}}{\alpha})}})<  \delta(\alpha)
    \label{c:l-finite2}
    \\
    \forall \s{t}{\s{\rl{t}}{\alpha}} \in \Delta
    &, ~
    (\delta(\alpha) < \om \,\text{ and }\, \delta(\s{\rl{t}}{\alpha}) = \om)
    \,\Rightarrow\,
    \delta(\s{t}{\s{\rl{t}}{\alpha}})  < \delta(\alpha)
    \label{c:l-inf}
    \\
    \forall \s{\rl{t}}{ \last(\rl{t})} \in \Delta
    &, ~
    \delta(\last(\rl{t})) = \om
    \,\Rightarrow\,
    \delta(\last(t)) = \om
    \label{c:l-last-inf}
    \\
    \forall \s{\rl{t}}{\last(\rl{t})}\in \Delta
    &, ~
    \delta(\s{\rl{t}}{\last(\rl{t})}) < \om
    \,\Rightarrow\,
    \delta(\s{t}{ \s{\rl{t}}{\last(\rl{t})}}) = \om.
    \label{c:l-last-value-finite}
  \end{align}
\end{dfn}

\begin{dfn}
  \label{definition:diagram}
  A~$\Delta$-prediagram~$\delta$ is called a \emph{$\Delta$-diagram} if it is structurally sound, logically sound,~$o$-sound, $\ell$-sound, and~$r$-sound.
\end{dfn}

It follows from the next proposition that any counterexample to the validity of an equation in $\WarpA$ restricts to a finite diagram witnessing this failure. More precisely, if $\WarpA\not\models\id \leq t_1 \jn \cdots \jn t_n$, where each $t_i$ is a basic term, and $\Delta$ is the saturation of the sample set $\{\s{t_1}{\kappa},\ldots,\s{t_n}{\kappa}\}$, then there exists a~$\Delta$-diagram~$\delta$ satisfying $\delta(\kappa)>\delta(t_i[\kappa])$ for each $i\in\{1,\dots,n\}$.

\begin{prop}\label{proposition:valuation-to-diagram}
Let~$T$ be a set of basic terms, $\kappa$ a time variable, and $\Delta$ the saturation of the sample set $\{\s{t}{\kappa}\mid t\in T\}$. Then for any valuation $\theta$ and $p\in\som$, there exists a~$\Delta$-diagram~$\delta$ such that~$\delta(\kappa)=p$ and~$\delta(\s{t}{\kappa})=\sem{t}_\theta(p)$ for all $t\in T$.
\end{prop}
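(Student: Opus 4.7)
The plan is to define $\delta$ directly by structural recursion on samples. Concretely, set $\delta(\kappa) \defeq p$, $\delta(\s{t}{\alpha}) \defeq \sem{t}_\theta(\delta(\alpha))$, $\delta(\suc(\alpha)) \defeq \delta(\alpha)\oplus 1$, $\delta(\pre(\alpha)) \defeq \delta(\alpha)\ominus 1$, and $\delta(\last(t)) \defeq \lastw(\sem{t}_\theta)$. Samples have unique head-constructors, so this recursion is well-defined on $\Time$; restricting to $\Delta$ yields a $\Delta$-prediagram satisfying $\delta(\s{t}{\kappa}) = \sem{t}_\theta(p)$ for all $t\in T$ by construction. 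All that remains is to verify that $\delta$ meets each clause of \Cref{definition:diagram}, using saturation of $\Delta$ to guarantee that every sample referenced in a clause is indeed in $\Delta$.

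Structural soundness is then a direct reading of the basic properties of time warps: \eqref{c:mon} and \eqref{c:zero} follow from monotonicity of $\sem{t}_\theta$ and the fact that a time warp sends $0$ to $0$; \eqref{c:pre} and \eqref{c:suc} hold by the very definitions of $\pre$ and $\suc$; and \eqref{c:last}, \eqref{c:last2} unpack to the characterization of $\lastw(f)$ as the least $q\in\som$ with $f(q)=f(\om)$, together with the observation that $\lastw(f)=\om$ forces $f(\om)=\om$ (otherwise $f(\om)$ would be attained at a finite argument, contradicting minimality). Logical soundness is equally routine: \eqref{c:id} and \eqref{c:prod} are compositionality of $\sem{\cdot}_\theta$; \eqref{c:bot} holds because $\sem{\bot}_\theta$ is constantly $0$, hence $\lastw(\sem{\bot}_\theta)=0$; and \eqref{c:last-prod1} is exactly \Cref{lemma:last-properties}(a).

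The $o$-, $r$-, and $\ell$-soundness conditions are verified case-by-case using \Cref{lemma:op-o,lemma:op-r,lemma:op-l}. For example, \eqref{c:o-values} and \eqref{c:o-inf} are direct consequences of \Cref{lemma:op-o}, while \eqref{c:o-last-finite} holds because $\ro{\sem{t}_\theta}$ is a time warp with values in $\{0,\om\}$, so it is either constantly $0$ or jumps to $\om$ at a finite argument, giving $\lastw(\ro{\sem{t}_\theta})<\om$; \eqref{c:o-last-value-finite} then follows by unpacking when this last value is finite. Conditions \eqref{c:r-lower} and \eqref{c:r-finite} express the inequality $\sem{t}_\theta\rr{\sem{t}_\theta}\le\id$ and the precise meaning of $\rr{f}(n)=m$ from \Cref{lemma:op-r}, and \eqref{c:r-last-inf} follows from \Cref{lemma:last-properties}(b); the remaining $r$- and $\ell$-clauses unpack the corresponding characterizations of $\rr{f}$ and $\rl{f}$ at $\om$ from the same lemmas.

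The main obstacle is not conceptual but simply the number of clauses and the need to distinguish, in several of them, whether $\delta(\alpha)$ equals $0$, a positive finite value, or $\om$, and likewise whether $\delta(\s{\rr{t}}{\alpha})$ or $\delta(\s{\rl{t}}{\alpha})$ is finite or infinite. Once the characterizations in \Cref{lemma:op-o,lemma:op-r,lemma:op-l} are in hand, each individual verification is a short computation.
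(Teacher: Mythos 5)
Your proposal is correct and follows essentially the same path as the paper: both define the identical map $\delta$ by structural recursion on samples (with $\delta(\last(t)) = \lastw(\sem{t}_\theta)$, $\delta(\suc(\alpha)) = \delta(\alpha)\oplus 1$, etc.) and then verify each soundness clause by case analysis using \Cref{lemma:op-o,lemma:op-r,lemma:op-l,lemma:last-properties}. The only cosmetic difference is that you justify well-definedness by recursion on $\Time$ and restriction to $\Delta$, whereas the paper argues via reachability from the initial samples under $\leadsto$; both are fine, and the remaining work in either case is the routine (if lengthy) 22-clause verification that the paper relegates to~\Cref{section:proof_delta_is_a_diagram}.
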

\begin{proof}
 We define the map $\delta\colon \Delta \to \som$ recursively by
  \[
    \begin{array}{rrl}
      & \delta(\kappa)
      & \defeq p
      \\
      \forall \s{t}{\alpha} \in \Delta,
      &
      \delta( \s{t}{\alpha}) & \defeq  \sem{t}_\theta(\delta(\alpha))
      \\
      \forall \last(t) \in \Delta,
      &
      \delta(\last(t)) & \defeq \lastw(\sem{t}_\theta)
      \\
      \forall \pre(\alpha) \in \Delta,
      &
      \delta(\pre(\alpha)) & \defeq \delta(\alpha) \ominus 1
      \\
      \forall \suc(\alpha) \in \Delta,
      &
      \delta(\suc(\alpha)) & \defeq  \delta(\alpha) \oplus 1.
    \end{array}
  \]
The map $\delta$ is well-defined since $\alpha \in \Delta$ if, and only if, there exist samples $\alpha_1,\ldots,\alpha_n$ such that $\alpha_1 = \s{t}{\kappa}$ for some $t\in T$, $\alpha_n = \alpha$, and $\alpha_j \leadsto \alpha_{j+1}$ for each $j\in\{1,\ldots,n-1\}$. So $\delta$ is a $\Delta$-prediagram. A proof that $\delta$ is a $\Delta$-diagram---i.e., that $\delta$ satisfies conditions~\Crefrange{c:mon}{c:l-last-value-finite}---may be found in~\Cref{section:proof_delta_is_a_diagram}.
\end{proof}

We now turn our attention to proving that every $\Delta$-diagram $\delta$ extends to a valuation $\theta$ satisfying $\sem{t}_\theta(\delta(\alpha)) = \delta(\s{t}{\alpha})$ for all $t[\alpha] \in \Delta$. First, we use $\delta$ to define a partial sup-preserving function $\diag{t}{\delta}$ for each basic term $t$.

\begin{dfn}
  \label{definition:points}
  For any $\Delta$-diagram $\delta$ and basic term~$t$, let
  \begin{mathpar}
    \diag{t}{\delta}
    \defeq
    \{
    (\delta(\alpha), \delta(\s{t}{\alpha}))
    \mid
    \s{t}{\alpha} \in \Delta
    \}.
  \end{mathpar}
A time warp~$f$ \emph{extends} $\diag{t}{\delta}$  if~$f(i) = j$ for all~$(i, j) \in \diag{t}{\delta}$, and \emph{strongly extends} $\diag{t}{\delta}$ if also  			\[
			\text{either }\diag{t}{\delta} =\emptyset\enspace\text{or}\enspace
			(\diag{t}{\delta}\neq\emptyset \,\text{ and }\, \delta(\last(t)) = \om \implies \lastw(f) = \om).
			\]
\end{dfn}

\begin{lem}\label{lemma:nonempty}
 There exists an effective procedure that produces for any finite $\Delta$-diagram $\delta$ and term variable $x$, an algorithmic description of a  time warp~$f$ that strongly extends~$\diag{x}{\delta}$.
\end{lem}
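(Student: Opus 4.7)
The plan is to construct $f$ by monotone interpolation of the finite partial function $\phi$ extracted from $\diag{x}{\delta}$. First I would observe that, by structural soundness (conditions~\eqref{c:mon} and~\eqref{c:zero}), $\phi$ is a well-defined monotone partial function from $\som$ to $\som$ that sends $0$ to $0$ whenever $0 \in \mathrm{dom}(\phi)$. If $\diag{x}{\delta} = \emptyset$, take $f \defeq \id$; the strong-extension clause is then vacuous. Otherwise, saturation of $\Delta$ guarantees $\s{x}{\last(x)} \in \Delta$, so that $L \defeq \delta(\last(x))$ lies in $\mathrm{dom}(\phi)$ and $v_L \defeq \phi(L) = \delta(\s{x}{\last(x)})$ is defined.

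The construction then splits on whether $L < \om$ or $L = \om$. Let $d_1 < \cdots < d_k$ enumerate the finite elements of $\mathrm{dom}(\phi)$, and for $i \in \om \setminus \{0\}$ set $f(i) \defeq \phi(d_j)$ for the largest $d_j \le i$, or $f(i) \defeq 0$ if no such $d_j$ exists. In the sub-case $L < \om$, condition~\eqref{c:last} yields $\phi(d_j) = v_L$ for every $d_j \ge L$, so I uniformly set $f(i) \defeq v_L$ for $i \ge L$ and $f(\om) \defeq v_L$. In the sub-case $L = \om$, we have $v_L = \om$ by~\eqref{c:last2}, and~\eqref{c:last} applied to any $\s{x}{\alpha} \in \Delta$ with $\delta(\alpha) < \om$ forces $\phi(\delta(\alpha)) \neq v_L = \om$, so each $\phi(d_j)$ is finite. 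In this sub-case I set $f(i) \defeq \phi(d_k) + (i - d_k)$ for $i > d_k$ and $f(\om) \defeq \om$ (taking $f \defeq \id$ if $k = 0$), ensuring strict growth of $f$ beyond the finite data.

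Verification proceeds by inspection. Monotonicity and $f(0) = 0$ follow from the construction and the properties of $\phi$; sup-preservation holds because $f$ stabilises at $v_L$ when $L < \om$ and is unbounded on $\om$ when $L = \om$. That $f$ extends $\diag{x}{\delta}$ is immediate on each $d_j$, and if $\om \in \mathrm{dom}(\phi)$ the case analysis together with~\eqref{c:last} yields $f(\om) = \phi(\om)$. The strong-extension clause is vacuous when $L < \om$ and, when $L = \om$, follows from $\lastw(f) = \om$, which is guaranteed by the strict growth of $f$ past $d_k$. Since the construction depends only on finitely many values of $\delta$, the output is an effective algorithmic description of $f$.

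The main obstacle I anticipate is the consistency required in the case $L = \om$: the construction would fail if some $\phi(d_j)$ were $\om$, because $f$ would then be constant at $\om$ past $d_j$, violating strong extension. This is precisely ruled out by condition~\eqref{c:last}: since $L = \om \not\le d_j$, we cannot have $\phi(d_j) = \phi(L) = v_L = \om$. Once this observation is in place, the remaining verifications reduce to routine case-by-case bookkeeping.
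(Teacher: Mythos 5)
Your proof is correct and follows essentially the same approach as the paper: construct $f$ by monotone interpolation of the finite partial function $\diag{x}{\delta}$, using \cref{c:mon} and \cref{c:zero} for well-definedness, \cref{c:last} to handle the plateau past $\delta(\last(x))$, and \cref{c:last2} to rule out a finite data point mapping to $\om$ so that $\lastw(f)=\om$ can be arranged in the $L=\om$ case. The only difference is cosmetic: the paper uses a single saturated-ramp formula $f(i) := \min(j_2,\, j_1 \oplus (i-i_1))$ uniformly, while you split on $L < \om$ versus $L = \om$ and use a step function in the former case, but the structure of the argument and the conditions invoked are the same.
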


\begin{proof}
If~$\diag{x}{\delta}=\emptyset$, then any time warp strongly extends it, so assume $\diag{x}{\delta}\neq\emptyset$.  By~\cref{c:mon},~$\diag{x}{\delta}$ can be considered as a partial map from~$\som$ to $\som$. Moreover, since~$\Delta$ is saturated, and, by~\cref{c:last},~$\delta(\s{x}{\last(x)}) \ge j$ for all~$(i, j) \in \diag{x}{\delta}$, we have~$(\delta(\last(x)), \delta(\s{x}{\last(x)})) \in \diag{x}{\delta}$.

Let~$X \coloneqq \diag{x}{\delta} \cup \{ (0, 0), (\om,\delta(\s{x}{\last(x)})) \}$. This is still a partial map by~\cref{c:zero} and~\cref{c:last}. For each $i \in\om$, there exists a unique pair~$(i_1, j_1), (i_2, j_2) \in X$ such that~$i_1 \le i < i_2$ and there is no~$(i_3, j_3) \in X$ with~$i_1 < i_3 < i_2$, and we define
 \[
 f(i) \defeq \min(j_2, j_1 \oplus (i - i_1)),
 \]
 where $n\oplus m := \min\{\omega, n+m \}$. Let also $f(\om) \defeq \delta(\s{x}{\last(x)}))$.

Clearly~$f$ is monotonic. It extends~$\diag{x}{\delta}$, since~$i = i_1 < \om$ implies~$f(i_1) = \min(j_2, j_1) = j_1$. In particular, $f(0) = 0$. To confirm that $f$ is a time warp, it remains to show  that~$f(\om) = \bigvee\{f(i)\mid i\in\om\}$. If~$\delta(\s{x}{\last(x)}) =f(\om) < \om$, then, by \cref{c:last2}, $\delta(\last(x)) < \om$ and, by monotonicity, $f(i) = f(\om)$ for each~$i \ge \delta(\last(x))$ and~$f(\om) = f(\delta(\last(x))) = \bigvee\{f(i)\mid i\in\om\}$.  If~$f(\om) = \om$, then for each~$j\in\om$, there exists an~$i\in\om$ such that~$f(i) > j$, and hence~$\bigvee_{i < \om} f(i) = \om = f(\om)$. 

Finally, suppose that~$\delta(\last(x)) = \om$. Then \cref{c:last2} yields $(\om,\om) \in \diag{x}{\delta}$ and for any $(i,j) \in \diag{x}{\delta}$, if $i\in\om$, then also $j\in\om$. Hence, $\lastw(f) = \om$, by the definition of $f$. So $f$ strongly extends~$\diag{x}{\delta}$.
\end{proof}

\begin{lem} \label{lemma:fundamental}
For every basic term~$t$, valuation~$\theta$, and~$\Delta$-diagram~$\delta$, if~$\theta(x)$ strongly extends~$\diag{x}{\delta}$ for every term variable~$x$, then~$\sem{t}_\theta$ strongly extends~$\diag{t}{\delta}$.
\end{lem}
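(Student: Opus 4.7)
The plan is to prove the lemma by structural induction on $t$. The base cases are straightforward: $t=x$ is the hypothesis; for $t=\id$, \cref{c:id} forces $\diag{\id}{\delta}\subseteq\{(p,p)\mid p\in\som\}$, so the identity function extends it, with $\lastw(\id)=\om$ handling strong extension trivially; for $t=\bot$, combining \cref{c:bot}, \cref{c:last}, and \cref{c:zero} yields $\delta(\s{\bot}{\alpha})=0$ for every $\s{\bot}{\alpha}\in\Delta$, so $\sem{\bot}_\theta=\bot$ extends $\diag{\bot}{\delta}$, and strong extension is vacuous since $\delta(\last(\bot))=0<\om$ whenever the diagram is non-empty.

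For the inductive step, the composition $t=uv$ is handled by \cref{c:prod}: saturation places $\s{u}{\s{v}{\alpha}}$ and $\s{v}{\alpha}$ in $\Delta$, so chaining the inductive hypotheses applied to $v$ at $\delta(\alpha)$ and to $u$ at $\delta(\s{v}{\alpha})$ gives the required equality, while strong extension follows from \cref{c:last-prod1} together with \Cref{lemma:last-properties}(a). For each unary case $t\in\{\ro{u},\rr{u},\rl{u}\}$ and each $\s{t}{\alpha}\in\Delta$, I would case-split on whether $\delta(\alpha)$ is $0$, lies in $\om\setminus\{0\}$, or equals $\om$. The zero case follows from \cref{c:zero}; the finite sub-case matches the characterizations of $\ro{f}$, $\rr{f}$, $\rl{f}$ at natural numbers from \Cref{lemma:op-o}, \Cref{lemma:op-r}, \Cref{lemma:op-l} against the finite-argument diagram conditions (namely \cref{c:o-inf} for $\ro{u}$; \cref{c:r-lower} and \cref{c:r-finite} for $\rr{u}$; and \cref{c:l-finite1}, \cref{c:l-finite2}, \cref{c:l-inf} for $\rl{u}$), with \cref{c:suc} and \cref{c:pre} unpacking the successor and predecessor samples; the case $\delta(\alpha)=\om$ uses \cref{c:last} to reduce to $\delta(\s{t}{\last(t)})$ and then pairs the last-value conditions \cref{c:o-last-finite}--\cref{c:o-last-value-finite}, \cref{c:r-last-inf}--\cref{c:r-last-value-finite}, \cref{c:l-last-inf}--\cref{c:l-last-value-finite} with the $\om$-value characterizations in the same three lemmas. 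Strong extension for $\ro{u}$ is vacuous by \cref{c:o-last-finite}; for $\rr{u}$ and $\rl{u}$ it combines \cref{c:r-last-inf} or \cref{c:l-last-inf} with the inductive hypothesis and \Cref{lemma:last-properties}(b).

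The main obstacle is the $\delta(\alpha)=\om$ sub-case of the three unary operations. The values $\ro{f}(\om)$, $\rr{f}(\om)$, $\rl{f}(\om)$ depend on the behavior of $f$ across all of $\om$, not merely at sample points, so they cannot be read off the diagram directly. The full strength of the \emph{strong} inductive hypothesis on $u$ is essential here: when a last-value condition forces $\sem{u}_\theta(m)=\om$ at a finite $m$, the resulting $\lastw(\sem{u}_\theta)\le m<\om$ must be reconciled with a potential $\delta(\last(u))=\om$ by the contrapositive of the strong-extension guarantee, thereby ruling out inconsistent configurations; symmetrically, when the diagram prescribes a finite $\om$-value but permits $\sem{u}_\theta$ to grow unboundedly through sample points, strong extension of $u$ delivers the non-eventually-constant behavior needed to match the $\om$-value characterizations. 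Careful case-matching of these sub-cases for each of the three unary operations is routine but combinatorially intricate.
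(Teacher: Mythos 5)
Your plan follows the paper's own proof of this lemma in essentially every respect: structural induction on $t$, base cases for $\id$ and $\bot$ via the logical-soundness conditions, the product case via \cref{c:prod}, \cref{c:last-prod1}, and \Cref{lemma:last-properties}, and each unary operation handled by a case split on whether $\delta(\alpha)$ is $0$, a finite non-zero value, or $\omega$, matched against the corresponding $o$-, $r$-, $\ell$-soundness conditions and the characterizations in \Cref{lemma:op-o}, \Cref{lemma:op-r}, and \Cref{lemma:op-l}, with the strong inductive hypothesis doing the real work in the $\delta(\alpha)=\omega$ case. The paper packages the inductive step as a cascade of sub-lemmas, one per operation (\Cref{lemma:fundamental-product} through \Cref{lemma:fundamental-bot}), but the decomposition, the case structure, and the central role of strong extension in determining the value at $\omega$ are the same as what you outline.
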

\begin{proof}
 By induction on~$t$. The case~$t = x$ is immediate and the other cases follow by a series of lemmas proved in~\Cref{section:fundamental}, and the induction hypothesis. 
\end{proof}

The next proposition is then a direct consequence of~Lemmas~\ref{lemma:nonempty} and~\ref{lemma:fundamental}.  

\begin{prop}\label{proposition:diagram-to-valuation}
There is an effective procedure that produces for any finite $\Delta$-diagram $\delta$, an algorithmic description of a valuation $\theta$ satisfying $\sem{t}_\theta(\delta(\alpha)) = \delta(\s{t}{\alpha})$ for all $t[\alpha] \in \Delta$.
\end{prop}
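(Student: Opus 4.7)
The plan is to combine Lemmas~\ref{lemma:nonempty} and~\ref{lemma:fundamental} in a straightforward manner, since the genuine difficulty has already been absorbed into those two results. The preceding text in fact tells us that this proposition is a \emph{direct} consequence of them, so the task reduces to assembling the valuation and verifying that the desired identity follows from the strong-extension property.

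First, I would exploit the finiteness of $\Delta$ to observe that only finitely many term variables $x_1,\dots,x_k$ occur in the basic terms labelling samples of $\Delta$. For each such $x_i$, an application of Lemma~\ref{lemma:nonempty} produces an algorithmic description of a time warp $f_i$ that strongly extends $\diag{x_i}{\delta}$. I would then define a valuation $\theta$ by setting $\theta(x_i) \defeq f_i$ for $i \in \{1,\dots,k\}$ and, say, $\theta(x) \defeq \id$ for every other variable $x$. For these remaining variables, $\diag{x}{\delta} = \emptyset$, so $\theta(x)$ strongly extends $\diag{x}{\delta}$ vacuously by the first clause of Definition~\ref{definition:points}; moreover, this choice makes $\theta$ effectively describable on the whole countable variable set, not merely on the finite support of~$\Delta$.

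Second, I would apply Lemma~\ref{lemma:fundamental} to this $\theta$ to conclude that $\sem{t}_\theta$ strongly extends $\diag{t}{\delta}$ for every basic term $t$. Unpacking Definition~\ref{definition:points}, this entails $\sem{t}_\theta(i) = j$ for every $(i,j) \in \diag{t}{\delta}$. For each $\s{t}{\alpha} \in \Delta$, the pair $(\delta(\alpha), \delta(\s{t}{\alpha}))$ lies in $\diag{t}{\delta}$ by construction, so we immediately obtain $\sem{t}_\theta(\delta(\alpha)) = \delta(\s{t}{\alpha})$, which is the required identity.

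I do not anticipate a substantive obstacle at this step: the real technical content lives in the case analysis underlying Lemma~\ref{lemma:fundamental}, where each of the clauses in the definition of a $\Delta$-diagram is used to handle one of the cases $\id,\bot,tu,\ro{t},\rr{t},\rl{t}$. The only mild care needed when writing out the present proposition is to combine the finitely many algorithmic descriptions produced by Lemma~\ref{lemma:nonempty} into a single algorithmic description of $\theta$, which is handled by the default-value convention above.
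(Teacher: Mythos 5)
Your proof is correct and follows exactly the route the paper intends: the paper simply asserts that the proposition is a direct consequence of Lemmas~\ref{lemma:nonempty} and~\ref{lemma:fundamental} without writing out the assembly, and your argument fills in precisely those details (applying Lemma~\ref{lemma:nonempty} to the finitely many variables appearing in $\Delta$, using a default such as $\id$ elsewhere, invoking Lemma~\ref{lemma:fundamental}, and unpacking the strong-extension property via Definition~\ref{definition:points}).
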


We are now ready to establish the main theorem of this section.

\begin{thm}
  \label{theorem:main-diagram-theorem}
Let $t_1,\ldots,t_n$ be basic terms, $\kappa$ a time variable, and $\Delta$ the saturation of the sample set  $\{\s{t_1}{\kappa},\ldots,\s{t_n}{\kappa}\}$. Then $\WarpA\not\models\id \leq t_1 \jn \cdots \jn t_n$ if, and only if, there exists a $\Delta$-diagram $\delta$ such that $\delta(\kappa) > \delta(\s{t_i}{\kappa})$ for all $i\in \{1,\ldots,n \}$. 
\end{thm}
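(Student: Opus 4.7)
The plan is to obtain the theorem as essentially a packaging of \Cref{proposition:valuation-to-diagram} (for one direction) and \Cref{proposition:diagram-to-valuation} (for the other), together with \Cref{lemma:finitesaturation} to guarantee that the saturated sample set $\Delta$ is finite so that the propositions apply.

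For the forward direction, I would start from $\WarpA \not\models \id \le t_1 \jn \cdots \jn t_n$ and argue as follows. Since the interpretation of a finite join is computed pointwise, there exist a valuation $\theta$ and an element $p \in \som$ such that $\sem{t_i}_\theta(p) < p$ for every $i \in \{1,\ldots,n\}$. Applying \Cref{proposition:valuation-to-diagram} with $T \defeq \{t_1,\ldots,t_n\}$ then yields a $\Delta$-diagram $\delta$ satisfying $\delta(\kappa) = p$ and $\delta(\s{t_i}{\kappa}) = \sem{t_i}_\theta(p)$ for all $i$, giving the required strict inequalities $\delta(\kappa) > \delta(\s{t_i}{\kappa})$ immediately.

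For the backward direction, I would suppose that a $\Delta$-diagram $\delta$ with $\delta(\kappa) > \delta(\s{t_i}{\kappa})$ for all $i$ is given. Because the initial sample set $\{\s{t_1}{\kappa},\ldots,\s{t_n}{\kappa}\}$ is finite, \Cref{lemma:finitesaturation} ensures that $\Delta$ is finite, so \Cref{proposition:diagram-to-valuation} produces a valuation $\theta$ with $\sem{t}_\theta(\delta(\alpha)) = \delta(\s{t}{\alpha})$ for every $\s{t}{\alpha} \in \Delta$. Setting $p \defeq \delta(\kappa)$ and specializing to $\alpha = \kappa$ and $t = t_i$, I obtain $\sem{t_i}_\theta(p) < p$ for each $i$. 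A brief case distinction on whether $p$ is finite or equal to $\om$ then shows that a join of finitely many elements strictly below $p$ is again strictly below $p$, whence $(\sem{t_1}_\theta \jn \cdots \jn \sem{t_n}_\theta)(p) < p = \sem{\id}_\theta(p)$ and therefore $\WarpA \not\models \id \le t_1 \jn \cdots \jn t_n$.

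There is essentially no obstacle beyond invoking machinery that is already in place; the content of the theorem lies in the preparatory construction of diagrams and the two propositions that connect them to valuations. The only points worth spelling out are the finiteness of $\Delta$, guaranteed by \Cref{lemma:finitesaturation}, and the elementary observation that a finite join of elements strictly below $p$ remains strictly below $p$, the only mildly subtle case being $p = \om$, for which strictness is preserved because each summand is then a natural number and a finite maximum of natural numbers is still a natural number.
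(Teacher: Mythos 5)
Your proof is correct and takes essentially the same route as the paper: the forward direction is \Cref{proposition:valuation-to-diagram} applied to a falsifying valuation and point, and the backward direction is \Cref{proposition:diagram-to-valuation}, with the closing observation that pointwise validity of $\id\le t_1\jn\cdots\jn t_n$ at $p$ fails precisely when each $\sem{t_i}_\theta(p)<p$ (finiteness of the join makes the max strictly below $p$, which is immediate in $\som$ without any real case distinction).
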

\begin{proof}
Suppose first that $\WarpA\not\models\id \leq t_1 \jn \cdots \jn t_n$. Then there exist a valuation $\theta$ and $p\in \som$ such that $p = \id(p) > \sem{t_i}_\theta(p)$ for all  $i\in \{1,\ldots,n \}$. Hence, by \Cref{proposition:valuation-to-diagram}, there exists a $\Delta$-diagram $\delta$ such that  $\delta(\kappa) = p > \sem{t_i}_\theta(p) = \delta(\s{t_i}{\kappa})$ for all  $i\in \{1,\ldots,n \}$. 

Now suppose that there exists a $\Delta$-diagram $\delta$ such that $\delta(\kappa) > \delta(\s{t_i}{\kappa})$ for all $i\in \{1,\ldots,n \}$.  Then, by \Cref{proposition:diagram-to-valuation}, there exists a valuation $\theta$ such that $\sem{t_i}_\theta(\delta(\kappa)) = \delta(\s{t_i}{\kappa})$ for all $i\in \{ 1,\ldots,n\}$. So $\id(\delta(\kappa) ) = \delta(\kappa) > \sem{t_i}_\theta(\delta(\kappa))$ for all $i\in \{ 1,\ldots,n\}$. Hence  $\WarpA\not\models\id \leq t_1 \jn \cdots \jn t_n$. 
\end{proof}





\section{Decidability via Logic}\label{section:logic}

Let $t_1, \dots, t_n$ be basic terms, $\kappa$ a time variable, and $\Delta$ the saturation of the sample set $\{t_1[\kappa], \dots, t_n[\kappa]\}$. Our aim in this section is to express the existence of a $\Delta$-diagram witnessing $\WarpA\not\models\id \leq t_1 \jn \ldots \jn t_n$, as stated in \Cref{theorem:main-diagram-theorem}, via an existential sentence over the natural numbers with the ordering and successor relations. Since the first-order theory of this structure is decidable, it follows that the equational theory of $\WarpA$ is decidable, concluding the proof of \Cref{theorem:main_result}.

Note that in the logic encoding, we will no longer allow $\om$ as a value for the variables. The theoretical reason why this is possible is that the ordinal $\som$ admits a first-order (even quantifier-free) interpretation in $\om$. However, we will avoid relying upon such model-theoretic generalities here and just give the necessary concrete definitions.

Our construction of a first-order formula $\phi$ encoding the existence of a $\Delta$-diagram uses the samples in $\Delta$ as variables and proceeds in two steps:

\begin{newlist}
\item[1.] We define a formula  $\psi$ with variables in $\Delta$, intended to be interpreted in $\som$, using the order relation symbol $\aleq$, the successor relation symbol $\aS$, and two further unary relation symbols $\aO$ and $\I$, where the intended interpretations of $\aO(x)$ and $\I(x)$  are ``$x = \om$'' and  ``$x = 0$'', respectively.
\item[2.] We obtain $\phi$ by eliminating the symbols $\aO$ and $\I$ from $\psi$ and re-interpreting  $\aleq$ and $\aS$ using an encoding of $\som$ in the structure $(\mathbb{N}, \leq, S, 0)$.
\end{newlist}

Let $\tau$ be the relational first-order signature with two binary relation symbols $\preceq$ and $\aS$, and two unary relation symbols $\aO$ and $\I$. We consider $\som$ as a $\tau$-structure by defining $\aleq^{\som}$ to be the natural ordering of $\som$, $\aS^{\som} \defeq \{(n, n+1)\mid n\in\om \} \cup \{(\om,\om)\}$, $\I^{\som} \defeq \{0\}$, and $\aO^{\som} \defeq \{\om\}$. Note that a  $\Delta$-prediagram is a valuation of the variables in $\Delta$ in this structure. 

We define $\psi$ by translating the defining properties of being a $\Delta$-diagram into quantifier-free formulas of first-order logic in the signature $\tau$ with variables from $\Delta$. In the following definition, the symbols $\andd$ and $\orr$ denote the logical connectives `and' and `or', respectively, and the notation $a \ale b$ is shorthand for $a \aleq b \andd \neg(b \aleq a)$. Note also that $\psi$ is well-defined, since $\Delta$ is finite by \Cref{lemma:finitesaturation}.

\begin{dfn}\label{definition:psi}
Let $\psi$ be the first-order quantifier-free $\tau$-formula 
\[
\mathlarger{\andd} (\mathsf{struct} \cup \mathsf{log} \cup \mathsf{bounds} \cup \mathsf{right} \cup \mathsf{left} \cup \mathsf{fail}),
\]
where the first five sets, corresponding to Definitions~\ref{diagram-structural}-\ref{diagram-l} in the definition of a diagram, and $\mathsf{fail}$,  expressing the failure of  $\id \leq t_1 \jn \dots \jn t_n$ in $\WarpA$ at the time variable $\kappa$, are defined as follows:
\begin{align*}
  \mathsf{struct}  \defeq \; &\{ \var{\alpha} \aleq \var{\beta} \To \var{t[\alpha]} \aleq \var{t[\beta]}\mid t[\alpha], t[\beta] \in \Delta\} \; \cup \\
                  &\{ \I(\var{\alpha}) \To \I(\var{t[\alpha]})\mid t[\alpha] \in \Delta\} \; \cup \\
                   &\{ \aS(\var{\pre(\alpha)}, \var{\alpha}) \orr (\I(\var{\pre(\alpha)}) \andd \I(\var{\alpha}))\mid \pre(\alpha) \in \Delta\} \; \cup \\
                   &\{ \aS(\var{\alpha}, \var{\suc(\alpha)})\mid \suc(\alpha) \in \Delta\} \; \cup \\
                   &\{ \var{\last(t)} \aleq \var{\alpha} \Leftrightarrow \var{t[\alpha]} \aeq \var{t[\last[t]]} \mid t[\alpha] \in \Delta \} \; \cup \\
                   &\{ \aO(\var{\last(t)}) \To \aO(\var{t[\last(t)]})\mid t[\last(t)] \in \Delta \}\\[.1in]
\mathsf{log} \defeq \; &\{ \var{\id[\alpha]} \aeq \var{\alpha}\mid \id[\alpha] \in \Delta \} \; \cup \\
                    &\{ \I(\var{\last(\bot)})\mid \s{\bot}{\alpha} \in \Delta \} \; \cup \\
                    &\{\var{tu[\alpha]} \aeq \var{t[u[\alpha]]}\mid tu[\alpha] \in \Delta \} \; \cup \\
                    &\{ \aO(\var{\last(tu)}) \To (\aO(\var{\last(t)}) \andd \aO(\var{\last(u)}))\mid tu[\last(tu)] \in \Delta \}\\[.1in]
\mathsf{bounds} \defeq \; &\{ \I(\var{ \s{\ro{t}}{\alpha} }) \orr \aO(\var{ \s{\ro{t}}{\alpha} })\mid \s{\ro{t}}{\alpha} \in \Delta\} \; \cup \\ 
  &\{ \neg \aO(\var{\alpha}) \Rightarrow ( \aO( \var{\s{\ro{t}}{\alpha}} ) \Leftrightarrow \aO ( \var{\s{t}{\alpha}} ) )\mid  \s{\ro{t}}{\alpha} \in \Delta \} \; \cup \\  %
&\{ \neg \aO(\var{\last(\ro{t})})\mid \last(\ro{t}) \in \Delta \} \; \cup \\ 
&\{ ( \neg \aO ( \var{\s{\ro{t}}{\last(\ro{t})}} ) \andd \neg \aO ( \var{\alpha} ) ) \Rightarrow \neg \aO ( \var{\s{t}{\alpha}} )\mid \s{t}{\alpha}, \s{\ro{t}}{\last(\ro{t})} \in \Delta \}\\[.1in] 
\mathsf{right} \defeq \; &\{ \var{ \s{t}{\s{\rr{t}}{\alpha}} } \aleq \var{\alpha}\mid  \s{t}{\s{\rr{t}}{\alpha}} \in \Delta \} \cup \\ 
            &\{ ( \neg \I(\var{\alpha}) \andd \neg \aO(\var{\alpha}) \andd \neg \aO ( \var{ \s{\rr{t}}{\alpha} } ) \Rightarrow \var{\alpha} \ale \var{ \s{t}{ \suc( \s{\rr{t}}{\alpha}) } }\mid \s{t}{\suc(\s{\rr{t}}{\alpha} )}\in \Delta \} \; \cup \\ 
  & \{ \aO(\var{\last(\rr{t})}) \Rightarrow \aO(\var{\last(t)})\mid 
    \s{\rr{t}}{\last(\rr{t})} \in \Delta \} \; \cup \\ 
  &\{ \neg\aO(\var{ \s{\rr{t}}{ \last(\rr{t})} } ) \Rightarrow \aO ( \var{ \s{t}{\suc(\s{\rr{t}}{ \last(\rr{t})})}} )\mid \s{t}{\suc(\s{\rr{t}}{\last(\rr{t})})} \in \Delta \}\\[.1in] 
\mathsf{left} \defeq \; &\{ \neg \aO( \var{ \s{\rl{t}}{\alpha} } ) \Rightarrow \var{\alpha} \aleq \var{\s{t}{\s{\rl{t}}{\alpha}} }\mid \s{t}{\s{\rl{t}}{\alpha}} \in \Delta\} \; \cup \\            &\{ ( \neg \I ( \var{ \alpha } ) \andd \neg \aO( \var{\alpha } ) \andd \neg \aO ( \var{ \s{\rl{t}}{\alpha} } ) )
              \Rightarrow
              \var {\s{t}{{\pre(\s{\rl{t}}{\alpha})}}} \ale \var{\alpha} 
             \mid \s{t}{\pre(\s{\rl{t}}{\alpha} )}\in \Delta
              \} \; \cup \\ 
            &\{
              (\neg\aO(\var{\alpha}) \andd \aO (\var{\s{\rl{t}}{\alpha}} ) )
              \Rightarrow
              \var{\s{t}{\s{\rl{t}}{\alpha}}}  \ale \var{\alpha}
             \mid \s{t}{\s{\rl{t}}{\alpha}} \in \Delta
              \} \; \cup \\ 
            &\{
              \aO(\var{\last(\rl{t})})
              \Rightarrow
              \aO(\var{\last(t)})
             \mid \s{\rl{t}}{\last(\rl{t})} \in \Delta
              \} \; \cup \\ 
  &\{ \neg\aO(\var\{\s{\rl{t}}{\last(\rl{t})}\}
     \Rightarrow
    \aO(\var{\s{t}{ \s{\rl{t}}{\last(\rl{t})}}})
    \mid \s{\rl{t}}{\last(\rl{t})}\in \Delta \} \\[.1in]
\mathsf{fail} \defeq \; & \{ \var{\s{t_i}{\kappa}} \ale \var{\kappa}\mid 1 \leq i \leq n \}.
\end{align*}
\end{dfn}

The next lemma then follows directly from the definition of a $\Delta$-diagram.

\begin{lem}\label{lemma:omega-plus-encoding}
Let $\delta \colon \Delta \to \som$ be a $\Delta$-prediagram. Then $\som, \delta \models \psi$ if, and only if, $\delta$ is a $\Delta$-diagram such that $\delta(\s{t_i}{\kappa}) < \delta(\kappa)$ for each $i \in \{1,\dots,n\}$.
\end{lem}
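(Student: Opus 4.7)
The lemma is essentially a bookkeeping exercise: the formula $\psi$ is constructed as a finite conjunction whose conjuncts are in one-to-one correspondence with the clauses of Definitions~\ref{diagram-structural}--\ref{diagram-l} defining a $\Delta$-diagram, together with the strict-inequality conditions $\delta(\s{t_i}{\kappa}) < \delta(\kappa)$ assembled into $\mathsf{fail}$. The plan is therefore to verify, clause by clause, that under the intended $\tau$-interpretation of $\som$, each formula in $\psi$ holds at $\delta$ if, and only if, the corresponding semantic condition on the $\Delta$-prediagram $\delta$ holds.

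The key observation that makes each such verification routine is that the symbols of $\tau$ are interpreted in $\som$ exactly as one would expect: $\aleq^{\som}$ is $\le$ on $\som$, $\I^{\som} = \{0\}$, $\aO^{\som} = \{\om\}$, and $\aS^{\som}$ is the standard successor relation augmented with the single pair $(\om,\om)$. Together these make $a \ale b$ express strict inequality and $a \aeq b$ (a shorthand for $a \aleq b \andd b \aleq a$) express equality, so most clauses of $\mathsf{struct}$, $\mathsf{log}$, $\mathsf{bounds}$, $\mathsf{right}$, $\mathsf{left}$ are literal transcriptions of the corresponding diagram conditions; e.g.\ a formula in $\mathsf{struct}$ of the form $\var{\alpha} \aleq \var{\beta} \To \var{t[\alpha]} \aleq \var{t[\beta]}$ holds at $\delta$ precisely when condition~\cref{c:mon} does, and similarly throughout.

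The only two clauses warranting slightly more attention are those encoding $\ominus 1$ and $\oplus 1$. For $\mathsf{pre}$, the disjunction $\aS(\var{\pre(\alpha)}, \var{\alpha}) \orr (\I(\var{\pre(\alpha)}) \andd \I(\var{\alpha}))$ is checked to capture $\delta(\pre(\alpha)) = \delta(\alpha) \ominus 1$ by the three cases $\delta(\alpha) = 0$, $\delta(\alpha) \in \om\setminus\{0\}$, and $\delta(\alpha) = \om$, using respectively the second disjunct, the standard successor pair $(\delta(\alpha)-1, \delta(\alpha)) \in \aS^{\som}$, and the pair $(\om,\om) \in \aS^{\som}$. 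The formula for $\suc$ is similar, using that from $\delta(\alpha) \in \om$ the only $\aS^{\som}$-successor is $\delta(\alpha)+1$, and from $\om$ it is $\om$. With these two cases handled and the remaining conjuncts being direct translations, one concludes that $\som, \delta \models \bigwedge(\mathsf{struct} \cup \mathsf{log} \cup \mathsf{bounds} \cup \mathsf{right} \cup \mathsf{left})$ is equivalent to $\delta$ being a $\Delta$-diagram, while $\som, \delta \models \bigwedge \mathsf{fail}$ is equivalent to $\delta(\s{t_i}{\kappa}) < \delta(\kappa)$ for all $i \in \{1,\dots,n\}$. Since there is no genuine obstacle, the proof in the paper will likely be a one-line reference to Definitions~\ref{diagram-structural}--\ref{diagram-l} and~\ref{definition:psi}.
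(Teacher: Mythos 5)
Your proposal is correct and matches the paper's treatment: the paper gives no explicit proof, merely remarking that the lemma ``follows directly from the definition of a $\Delta$-diagram,'' and the clause-by-clause verification you outline (including the case analysis for $\pre$ and $\suc$ and the observation that $\ale$ and $\aeq$ encode strict inequality and equality under the given $\tau$-interpretation of $\som$) is exactly the bookkeeping that remark gestures at. Your prediction about the paper's phrasing was accurate.
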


\Cref{theorem:main-diagram-theorem} and \Cref{lemma:omega-plus-encoding} together show that $\WarpA\not\models\id \leq t_1 \jn \ldots \jn t_n$ if, and only, if $\psi$ is satisfiable in $\som$. We could therefore conclude the proof of \Cref{theorem:main_result} at this point by appealing to classical decidability results on the first-order theory of ordinals \cite{LL66}. Instead, however, we show explicitly how to interpret the $\tau$-structure $\som$ inside the standard model $(\mathbb{N}, \leq, S, 0)$, which is more commonly available in satisfiability solvers.

Consider the first-order signature $\sigma$ with two binary relation symbols $\leq$ and $S$, and one constant symbol $0$, and let $\mathbb{N}$ denote the $\sigma$-structure based on the natural numbers, where $\leq^{\mathbb{N}}$ is the usual order, $S^{\mathbb{N}} \defeq \{(n, n + 1) \mid n \in \mathbb{N} \}$, and $0^{\mathbb{N}} \defeq 0$. The following definition and  lemma contain the crucial observation needed for encoding $\tau$-formulas over $\som$ into $\sigma$-formulas over $\mathbb{N}$.\footnote{We thank Thomas Colcombet for suggesting this idea.}

\begin{dfn}
Define the bijection $\iota \colon \mathbb{N} \to \som$ by $\iota(0) \defeq \om$, and $\iota(n) \defeq n - 1$ for each $n\in\om\mathop{\setminus}\{0\}$.
\end{dfn}
For any valuation $w \colon \Delta \to \mathbb{N}$, let $\hat{w} \colon \Delta \to \som$ denote the function defined by $\hat{w}(x) \defeq \iota(w(x))$. Note that the map $w \mapsto \hat{w}$ is a bijection between $\mathbb{N}^\Delta$ and $(\som)^\Delta$, since  $\iota$ is a bijection.

\begin{lem}\label{lemma:encode-in-omega}
  Let $\chi$ be a quantifier-free $\tau$-formula. Define $\chi'$ to be the quantifier-free $\sigma$-formula obtained from $\chi$ by making the following symbolic substitutions for every occurrence of an atomic formula in $\chi$:
  \begin{enumerate}
  \item[\rm (i)] $\aO(x)$ is replaced by $x = 0$
  \item[\rm (ii)] $\I(x)$ is replaced by $S(0,x)$
  \item[\rm (iii)] $\aS(x,y)$ is replaced by $(x = 0 \andd y = 0) \orr (\neg(x = 0) \andd S(x,y))$
  \item[\rm (iv)] $x \aleq y$ is replaced by $y = 0 \orr (\neg(x = 0) \andd x \leq y)$.
  \end{enumerate}
  Then, for any valuation $w \colon \Delta \to \mathbb{N}$,  $\mathbb{N}, w \models \chi'$ if, and only if, $\som, \hat{w} \models \chi$.
\end{lem}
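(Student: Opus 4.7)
The plan is to proceed by a straightforward induction on the structure of the quantifier-free $\tau$-formula $\chi$. Since the syntactic substitution rules~(i)--(iv) act only on atomic subformulas and commute with the Boolean connectives $\neg$, $\andd$, and $\orr$, the inductive cases are immediate: if the desired equivalence $\mathbb{N}, w \models \chi' \Leftrightarrow \som, \hat{w} \models \chi$ holds for two quantifier-free subformulas, then it also holds for their conjunction, disjunction, and negations. The real content of the argument therefore lies in verifying the four atomic cases.

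For these, I would first record two elementary consequences of the definition of $\iota$: namely, $\iota(n) = \om$ if, and only if, $n = 0$, and $\iota(n) = 0$ if, and only if, $S^{\mathbb{N}}(0, n)$ holds (i.e.\ $n = 1$). Furthermore, $\iota$ restricts to an order-isomorphism between $\mathbb{N} \setminus \{0\}$ and $\om$ via $n \mapsto n - 1$. Cases~(i) and~(ii) then follow directly from these observations. For case~(iii), I would split on whether $\hat{w}(x) = \om$: if so, then membership in $\aS^{\som}$ forces $\hat{w}(y) = \om$ as well, matching the first disjunct $(x = 0) \andd (y = 0)$ of the encoding; otherwise, $(\hat{w}(x), \hat{w}(y)) \in \aS^{\som}$ holds precisely when $\hat{w}(y) = \hat{w}(x) + 1$ inside $\om$, which under $\iota$ translates to $S^{\mathbb{N}}(w(x), w(y))$ together with $w(x) \neq 0$. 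Case~(iv) is analogous: the first disjunct $y = 0$ handles exactly the situation $\hat{w}(y) = \om$, in which $\hat{w}(x) \aleq \hat{w}(y)$ in $\som$ is automatic; otherwise $\hat{w}(y) \in \om$, which forces $\hat{w}(x) \in \om$ (equivalently $w(x) \neq 0$), and the order-isomorphism gives $\hat{w}(x) \aleq \hat{w}(y)$ if, and only if, $w(x) \leq w(y)$.

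No step here poses a genuine obstacle; each atomic case reduces to a short case distinction across the two possibilities for each of $w(x)$ and $w(y)$ that distinguish $\om$ from the finite elements of $\som$ under $\iota$. The only point requiring minor attention is the role of the extra conjuncts $\neg(x = 0)$ in the encodings of~(iii) and~(iv), which serve to guard against spurious readings arising when variables are assigned the value $0$ in $\mathbb{N}$ (which, under $\iota$, encodes $\om$ in $\som$).
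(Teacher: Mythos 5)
Your proof is correct and takes essentially the same route as the paper: induction on the Boolean structure with the inductive steps immediate, reducing everything to a direct verification of the four atomic cases via the definition of $\iota$. The paper spells out only case~(iv) as an example, whereas you sketch all four, but the reasoning is the same.
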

\begin{proof}
  By induction on the complexity of $\chi$. The induction step is immediate, and the atomic cases essentially follow from the definitions; we just show the proof for $x \aleq y$ as an example. For any valuation $w$, we have $\som, \hat{w} \models x \aleq y$ if, and only if, $\hat{w}(y) = \om$ or ($\hat{w}(x) \neq \omega$ and $\hat{w}(x) \leq \hat{w}(y)$) in $\som$. Using the definition of $\hat{w}$, this is equivalent to $w(y) = 0$ or ($w(x) \neq 0$ and $w(x) \leq w(y)$) in $\mathbb{N}$, that is, $\mathbb{N}, w \models y = 0 \orr (\neg(x = 0) \andd x \leq y)$.
\end{proof}

Finally, we define our quantifier-free $\sigma$-formula $\phi$ encoding the non-validity of $\id \leq t_1 \jn \cdots \jn t_n$ in $\WarpA$.

\begin{dfn}
Let $\phi \defeq \psi'$, the $\sigma$-formula obtained from the $\tau$-formula $\psi$ (\Cref{definition:psi}) by performing the replacements in \Cref{lemma:encode-in-omega}.
\end{dfn}

We are now ready to put everything together.

\begin{thm}\label{theorem:main-logic}
  The time warp equation $\id \leq t_1 \jn \cdots \jn t_n$ is valid in $\WarpA$ if, and only if, the quantifier-free $\sigma$-formula $\phi$ is unsatisfiable in $\mathbb{N}$. Moreover, any valuation $w \colon \Delta \to \mathbb{N}$ such that $\mathbb{N}, w \models \phi$ effectively yields a valuation $\theta$ of the time warp variables occurring in $t_1 \jn \cdots \jn t_n$ such that $\WarpA, \theta \models \id \nleq t_1 \jn \cdots \jn t_n$.
\end{thm}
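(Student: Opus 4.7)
The plan is to chain together the equivalences already established. For the first claim, I would prove the contrapositive: $\WarpA \not\models \id \le t_1 \jn \cdots \jn t_n$ if, and only if, $\phi$ is satisfiable in $\mathbb{N}$. Starting from the right-hand side, suppose $w \colon \Delta \to \mathbb{N}$ satisfies $\mathbb{N}, w \models \phi$. By \Cref{lemma:encode-in-omega} applied to $\chi \defeq \psi$ (so that $\chi' = \phi$ by the definition of $\phi$), the valuation $\hat w \colon \Delta \to \som$ then satisfies $\som, \hat w \models \psi$. \Cref{lemma:omega-plus-encoding} converts this into the statement that $\hat w$ is a $\Delta$-diagram with $\hat w(\s{t_i}{\kappa}) < \hat w(\kappa)$ for every $i$, and \Cref{theorem:main-diagram-theorem} then yields $\WarpA \not\models \id \le t_1 \jn \cdots \jn t_n$. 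The converse direction runs the same equivalences backwards, using that $w \mapsto \hat w$ is a bijection between $\mathbb N^\Delta$ and $(\som)^\Delta$.

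For the \emph{moreover} part, the argument above is already constructive. Given $w$ with $\mathbb N, w \models \phi$, I would set $\delta \defeq \hat w$, which the first part identifies as a $\Delta$-diagram satisfying $\delta(\s{t_i}{\kappa}) < \delta(\kappa)$ for every $i \in \{1,\dots,n\}$. I would then apply the effective procedure of \Cref{proposition:diagram-to-valuation} to $\delta$, producing a valuation $\theta$ for which $\sem{t}_\theta(\delta(\alpha)) = \delta(\s{t}{\alpha})$ for all $\s{t}{\alpha} \in \Delta$. In particular, $\sem{t_i}_\theta(\delta(\kappa)) = \delta(\s{t_i}{\kappa}) < \delta(\kappa) = \id(\delta(\kappa))$ for each $i$, giving $\WarpA, \theta \models \id \nleq t_1 \jn \cdots \jn t_n$ as required.

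No substantial new work is needed here; the theorem is a routine assembly of the pieces developed in the previous sections. The only subtlety worth double-checking is that the syntactic substitutions of \Cref{lemma:encode-in-omega}, when applied to the formula $\psi$ of \Cref{definition:psi}, indeed produce the formula $\phi$ --- which is immediate from $\phi \defeq \psi'$. Combined with the classical decidability of the first-order theory of $(\mathbb N, \le, S, 0)$ (in particular, the satisfiability of quantifier-free $\sigma$-formulas is decidable), this theorem completes the proof of \Cref{theorem:main_result}.
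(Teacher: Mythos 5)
Your proof is correct and follows essentially the same route as the paper: both chain Theorem~\ref{theorem:main-diagram-theorem}, Lemma~\ref{lemma:omega-plus-encoding}, and Lemma~\ref{lemma:encode-in-omega} to establish the equivalence, and both invoke Proposition~\ref{proposition:diagram-to-valuation} on the diagram $\delta = \hat w$ to extract the refuting valuation $\theta$ for the \emph{moreover} clause. The only cosmetic difference is the direction in which you traverse the chain of equivalences, which is immaterial.
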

\begin{proof}
  By \Cref{theorem:main-diagram-theorem}, the equation $\id \leq t_1, \jn \cdots \jn t_n$ is not valid in $\WarpA$ if, and only if, there exists a $\Delta$-diagram $\delta$ such that $\delta(\s{t_i}{\kappa}) < \delta(\kappa)$ for all $i\in\{1,\dots,n\}$. By \Cref{lemma:omega-plus-encoding}, the latter is equivalent to the existence of a valuation $v \colon \Delta \to \som$ such that $\som, v \models \psi$. By \Cref{lemma:encode-in-omega}, the latter is in turn equivalent to the existence of a valuation $w \colon \Delta \to \mathbb{N}$ such that $\mathbb{N}, w \models \phi$.

  For the second claim, we retrace our steps. If $w \colon \Delta \to \mathbb{N}$ is a valuation such that $\mathbb{N}, w \models \phi$, define the function $\delta \colon \Delta \to \som$ by $\delta(\alpha) \defeq \iota(w(\var{\alpha}))$ for $\alpha \in \Delta$. By \Cref{lemma:omega-plus-encoding}, $\delta$ is a $\Delta$-diagram such that $\delta(\s{t_i}{\kappa}) < \delta(\kappa)$ for each $i\in\{1,\dots,n\}$. By \Cref{proposition:diagram-to-valuation}, $\delta$ effectively yields a valuation $\theta$ that falsfies $\id \leq t_1 \jn \cdots \jn t_n$.
\end{proof}

\Cref{theorem:main_result} follows now directly from \Cref{theorem:main-logic} and the decidability of the first-order theory of $\mathbb{N}$ (see, e.g., \cite{LL66}).


\subsubsection*{Concluding remark.}

The proof of~\Cref{theorem:main-logic}, together with the normal form results of~\Cref{section:normalform}, provides a decision procedure for the equational theory of the time warp algebra, as explained in~\Cref{section:introduction}. We are currently in the process of implementing this decision procedure in a software tool. This tool is written in the OCaml functional programming language~\cite{OCaml412} and uses the Z3 theorem prover~\cite{Z3} to decide the satisfiability of the final logic formula. Our experiments with a preliminary implementation for basic time warp terms have been encouraging so far, and we hope to integrate a full version in a compiler for graded modalities. From a complexity perspective, the most challenging issue here is to deal with the potentially very large saturated sample sets and corresponding logic formulas produced by time warp equations. We therefore intend to consider encodings of the decision problem for time warps using alternative, possibly more efficient, data structures such as---following a helpful suggestion of one of the referees of this paper---{\em arrays}  (see~\cite{BMS06}) that are also supported by the Z3 theorem prover.




\bibliographystyle{splncs04}
\bibliography{bibliography_combined}

\newpage



\appendix
\section{Appendix}


\subsection{Proof of~\Cref{lemma:finitesaturation}}\label{section:proof_finite_saturation}

\begin{dfn}
The sample $\gamma_\alpha^\beta$ is defined inductively for samples $\alpha, \beta, \gamma$ by
\begin{align*}
\kappa_\alpha^\beta & \defeq \begin{cases}
\beta &\text{if } \kappa = \alpha \\
\kappa & \text{otherwise};
\end{cases} 
&\last(t)_\alpha^\beta & \defeq \begin{cases}
\beta &\text{if } \last(t) = \alpha \\
\last(t) & \text{otherwise};
\end{cases}  \\
 \s{t}{\gamma}_\alpha^\beta & \defeq \begin{cases}
\beta &\text{if } \s{t}{\gamma} = \alpha \\
\s{t}{\gamma_\alpha^\beta}& \text{otherwise};
\end{cases}  
&{\sf q}(\gamma)_\alpha^\beta & \defeq \begin{cases}
\beta &\text{if }  {\sf q}(\gamma) = \alpha \\
 {\sf q}(\gamma_\alpha^\beta)& \text{otherwise}
\end{cases} \quad \text{for } {\sf q} \in \{ \suc,\pre \}. 
\end{align*}
Note that $(\gamma_\alpha^\beta)_\beta^\alpha = \gamma$.
\end{dfn}

\begin{dfn}
For samples $\alpha,\beta_1,\ldots,\beta_k$, let $\mu(\beta_1,\ldots,\beta_k)   \defeq \lvert \{ \beta_1,\ldots,\beta_k \}^\leadsto \rvert$ and $\mu_\alpha(\beta_1,\ldots,\beta_k) \defeq\lvert M_\alpha(\beta_1,\ldots,\beta_k) \rvert$, where $M_\alpha(\beta_1,\ldots,\beta_k)$ denotes the set of samples $\beta \in  \{ \beta_1,\ldots,\beta_k\}^\leadsto$ such that whenever $\alpha_1 \leadsto \cdots \leadsto \alpha_n  $ with $\alpha_1 = \beta_j$ and $\alpha_n = \beta$, there exists an $i\in \{1,\ldots,n \}$ such that $\alpha_i = \alpha$.
\end{dfn}

Note that clearly $\mu(\alpha_1,\ldots,\alpha_k) \leq \mu(\alpha_1)+\ldots + \mu(\alpha_k)$.

\begin{lem}\label{lemma:saturation-inequality}
For any basic term $t$, ${\sf q} \in \{ \suc,\pre \}$, samples $\alpha,\gamma_1,\gamma_2$, and time variable $\kappa$,
\begin{align*}
\mu(\s{t}{\alpha},\s{t}{{\sf q}(\alpha)},\gamma_1,\gamma_2)& \leq \mu(\s{t}{\kappa},\s{t}{{\sf q}(\kappa)},\gamma_1,\gamma_2) + \mu_\alpha(\s{t}{\alpha},\s{t}{{\sf q}(\alpha)},\gamma_1,\gamma_2)\\
\mu(\s{t}{\alpha}) & \leq \mu(\s{t}{\kappa}) + \mu(\alpha).
\end{align*} 
In particular, $\mu(\s{t}{\last(u)}) \leq \mu(\s{t}{\kappa})$ for any basic term $u$.
\end{lem}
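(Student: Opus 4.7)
The argument relies on the fact that the reduction relation $\leadsto$ is defined by rule schemas that mention no specific sample, and in particular no specific variable $\kappa$. By direct inspection of the rules in Definition~\ref{d:saturated}, the substitution $\beta \mapsto \beta_\kappa^\alpha$ commutes with $\leadsto$ in the forward direction: whenever $\gamma \leadsto \gamma'$, also $\gamma_\kappa^\alpha \leadsto \gamma'_\kappa^\alpha$ for any sample $\alpha$. Iterating, this yields the forward inclusion $(\{\s{t}{\kappa}\}^\leadsto)_\kappa^\alpha \subseteq \{\s{t}{\alpha}\}^\leadsto$.

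For the second inequality I would establish the reverse containment modulo $\{\alpha\}^\leadsto$, namely $\{\s{t}{\alpha}\}^\leadsto \subseteq (\{\s{t}{\kappa}\}^\leadsto)_\kappa^\alpha \cup \{\alpha\}^\leadsto$, by induction on the length of a reduction path $\s{t}{\alpha} \leadsto^* \beta$. The base case is immediate; for the inductive step, assume the predecessor of $\beta$ has the form $\gamma_\kappa^\alpha$ for some $\gamma \in \{\s{t}{\kappa}\}^\leadsto$ and analyse the rule producing $\beta$ from $\gamma_\kappa^\alpha$. Since $\kappa$ is atomic and cannot match any composite left-hand side, either $\gamma = \kappa$, in which case $\gamma_\kappa^\alpha = \alpha$ and the rule application places $\beta \in \{\alpha\}^\leadsto$; or $\gamma$ already shares the top-level structure required by the rule, and so the rule lifts to $\gamma \leadsto \gamma'$ with $\gamma'_\kappa^\alpha = \beta$. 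Taking cardinalities gives $\mu(\s{t}{\alpha}) \le \mu(\s{t}{\kappa}) + \mu(\alpha)$. The first inequality follows by the same strategy applied to the enlarged seed $\{\s{t}{\alpha}, \s{t}{{\sf q}(\alpha)}, \gamma_1, \gamma_2\}$: samples outside $M_\alpha(\cdots)$ admit, by definition, some reduction path avoiding $\alpha$, which either begins at $\gamma_i$ (and lifts trivially to the $\kappa$-seed) or at $\s{t}{\alpha}$ or $\s{t}{{\sf q}(\alpha)}$ (and lifts via the substitution $\alpha \mapsto \kappa$ at the start).

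For the in-particular claim $\mu(\s{t}{\last(u)}) \le \mu(\s{t}{\kappa})$, specialising the above inclusion to $\alpha = \last(u)$ gives $\{\s{t}{\last(u)}\}^\leadsto \subseteq (\{\s{t}{\kappa}\}^\leadsto)_\kappa^{\last(u)} \cup \{\last(u)\}^\leadsto$. Because $\last(u)$ is not on the left-hand side of any rule, $\{\last(u)\}^\leadsto = \{\last(u)\}$; furthermore, $\kappa \in \{\s{t}{\kappa}\}^\leadsto$ via the rule $\s{t}{\kappa} \leadsto \kappa$ and $\kappa_\kappa^{\last(u)} = \last(u)$, so already $\{\last(u)\} \subseteq (\{\s{t}{\kappa}\}^\leadsto)_\kappa^{\last(u)}$. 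The $\{\last(u)\}^\leadsto$ summand is thus absorbed, yielding the sharper bound. The main obstacle is the rule-by-rule verification of the lifting step: for each of the eight rule schemas in Definition~\ref{d:saturated} one must check that when $\gamma \neq \kappa$ the same rule also applies to $\gamma$ and produces a right-hand side compatible with $\beta$ after substitution. The $\last$-introduction rule $\s{s}{\delta} \leadsto \s{s}{\last(s)}$ deserves particular care: its right-hand side is independent of $\delta$ and contains no $\kappa$, so the lift preserves it verbatim even when $\alpha = \last(s)$ happens to match the introduced term, and this case must be explicitly dispatched to rule out any spurious interaction between the substitution and the generated $\last(s)$.
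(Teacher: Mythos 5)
Your overall strategy matches the paper's: compare the saturation of the $\alpha$-seeds to the saturation of the $\kappa$-seeds via the substitution connecting $\alpha$ and $\kappa$, with an error term accounting for samples that can only be reached through $\alpha$. Your treatment of the \emph{second} inequality and the \emph{in-particular} claim is correct and, in fact, marginally cleaner than the paper's: by working with the substitution $(\cdot)_\kappa^\alpha$ (replacing $\kappa$ by $\alpha$) and the lifting ``$\gamma_\kappa^\alpha \leadsto \beta$ with $\gamma \neq \kappa$ implies $\gamma \leadsto \gamma'$ with $\gamma'_\kappa^\alpha = \beta$'', the problematic rule $\s{s}{\delta} \leadsto \s{s}{\last(s)}$ causes no trouble, because its right-hand side contains no $\kappa$ and is therefore fixed by $(\cdot)_\kappa^\alpha$.

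However, there is a genuine gap in your treatment of the \emph{first} inequality. There you switch to the opposite substitution $(\cdot)_\alpha^\kappa$ to push an $\alpha$-avoiding reduction path over to the $\kappa$-seeds, and this substitution does \emph{not} commute with $\leadsto$ in general: if $\alpha = \last(s)$ for a term $s$ appearing in the path, then the step $\s{s}{\delta} \leadsto \s{s}{\last(s)}$ has $\s{s}{\delta}_\alpha^\kappa = \s{s}{\delta_\alpha^\kappa}$, yet $\s{s}{\last(s)}_\alpha^\kappa = \s{s}{\kappa}$, whereas the rule applied to $\s{s}{\delta_\alpha^\kappa}$ produces $\s{s}{\last(s)}$, not $\s{s}{\kappa}$. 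Your closing remark says that ``the lift preserves it verbatim even when $\alpha = \last(s)$''; that is true for $(\cdot)_\kappa^\alpha$ but \emph{false} for the $(\cdot)_\alpha^\kappa$ direction you actually invoke for the first inequality. This is precisely why the paper's proof opens with a separate dispatch of the case $\alpha = \last(u)$ (where even the sharper bound $\mu(\s{t}{\alpha},\s{t}{{\sf q}(\alpha)},\gamma_1,\gamma_2) \leq \mu(\s{t}{\kappa},\s{t}{{\sf q}(\kappa)},\gamma_1,\gamma_2)$ holds, essentially by your in-particular argument applied to all four seeds), and only then proceeds under the standing assumption $\alpha \neq \last(u)$. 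You should add this case split; with it in place, your argument for the first inequality goes through and the rest of the proposal is sound.
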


\begin{proof}
If $\alpha = \last(u)$ for some basic term $u$, then clearly even the inequality $\mu(\s{t}{\alpha},\s{t}{{\sf q}(\alpha)},\gamma_1,\gamma_2) \leq \mu(\s{t}{\kappa},\s{t}{{\sf q}(\kappa)},\gamma_1,\gamma_2)$ holds. Suppose that $\alpha \neq \last(u)$. Let  $A \defeq \{\s{t}{\kappa},\s{t}{{\sf q}(\kappa)}\} \cup \{\gamma_1,\gamma_2\}^\leadsto \cup M_\alpha(\s{t}{\kappa},\s{t}{{\sf q}(\kappa)},\gamma_1,\gamma_2)$, where we assume for convenience of notation that these unions are disjoint. Define the function $K$ from $A$ to the set of all samples by 
\[
K(\beta) 
\defeq \begin{cases}
\beta &\text{if } \beta \in M_\alpha(\s{t}{\kappa},\s{t}{{\sf q}(\kappa)},\gamma_1,\gamma_2) \cup \{\gamma_1,\gamma_2\}^\leadsto \\
\beta_\kappa^\alpha &\text{if } \beta \in\{\s{t}{\kappa},\s{t}{{\sf q}(\kappa)}\}^\leadsto.
\end{cases}
\]
It suffices to show that $  \{\s{t}{\alpha},\s{t}{{\sf q}(\alpha)},\gamma_1,\gamma_2\}^\leadsto$ is contained in the image of $K$.
Let $\beta \in \{\s{t}{\alpha},\s{t}{{\sf q}(\alpha)},\gamma_1,\gamma_2\}^\leadsto$. 
If $\beta \in \{\gamma_1,\gamma_2\}^\leadsto$, then clearly $\beta$ is in the image of $K$. So we may assume that $\beta \in\{\s{t}{\alpha},\s{t}{{\sf q}(\alpha)}\}^\leadsto \setminus \{\gamma_1,\gamma_2\}^\leadsto $.  
Then either there exist $\alpha_1,\ldots,\alpha_n$ with $\alpha_1 \in \{\s{t}{\alpha},\s{t}{{\sf q}(\alpha)}\}$, $\alpha_n = \beta$, and $\alpha_1 \leadsto \ldots \leadsto \alpha_n$ such that  $\alpha_i \neq \alpha$ for all $i \in \{ 1,\ldots,n \}$, or not.
If not, then $\beta \in M_\alpha(\s{t}{\kappa},\s{t}{{\sf q}(\kappa)},\gamma_1,\gamma_2)$, i.e.,  $\beta = K(\beta)$ is in the image of $K$. 
Otherwise we want to show that  ${\alpha_1}_\alpha^\kappa \leadsto \ldots \leadsto {\alpha_n}_\alpha^\kappa$. 
Then, since  ${\alpha_1}_\alpha^\kappa =\{\s{t}{\kappa},\s{t}{{\sf q}(\kappa)}\}$,  we have $\beta^\kappa_\alpha \in \{\s{t}{\kappa},\s{t}{{\sf q}(\kappa)}\}^\leadsto $ and $\beta = K({\beta}_\alpha^\kappa)$ is in the image of $K$.
 We prove the claim by induction on $n$. If $n= 1$, then there is nothing to prove. 
 Suppose that the claim is proved for $n$ and we have  $\alpha_1  \in  \{\s{t}{\alpha},\s{t}{{\sf q}(\alpha)}\}$, $ \alpha_{n+1} = \beta$, and $\alpha_1 \leadsto \ldots \leadsto \alpha_n \leadsto \alpha_{n+1}$. 
 By the induction hypothesis we get  ${\alpha_1}_\alpha^\kappa \leadsto \ldots \leadsto {\alpha_n}_\alpha^\kappa$.
  Since $\alpha_n \neq \alpha$, $\alpha_{n+1} \neq \alpha$ and $\alpha \neq \last(u)$ for any basic term $u$, it is clear from the saturation conditions that also  ${\alpha_n}_\alpha^\kappa \leadsto {\alpha_{n+1}}_\alpha^\kappa$. For the second inequality the proof is analogous.
\end{proof}

\noindent
{\em Proof of \Cref{lemma:finitesaturation}}. 
It suffices to prove that the saturation of $\{ \alpha \}$ is finite for any sample $\alpha$, i.e., that $\mu(\alpha)$ is finite. Clearly, $\mu(\suc(\alpha)) \leq  1 + \mu(\alpha) $ and $\mu(\pre(\alpha)) \leq 1 + \mu(\alpha)$. So, by \Cref{lemma:saturation-inequality}, it suffices to prove that $\mu(\s{t}{\kappa})$ is finite for every term $t$ and time variable $\kappa$, proceeding by induction on $t$. If $t \in \TermV \cup \{ \id, \bot \}$, then $\{ \s{t}{\kappa} \}^\leadsto = \{ \s{t}{\kappa}, \kappa, \s{t}{\last(t)}, \last(t) \}$, so $\mu(\s{t}{\kappa}) = 4$. 

If $t = a_1\cdots a_n$, where $a_1,\ldots,a_n$ are terms that are not products, then by the saturation conditions,
\[
\{  \s{t}{\kappa} \}^\leadsto = \{\s{t}{\kappa},\s{t}{\last(t)})\} \cup  \bigcup_{i=1}^n \bigcup_{ \alpha \in \{\kappa,\last(t) \} } \{ \s{a_1\cdots a_i}{\s{a_{i+1}\cdots a_n}{\alpha}} \}^\leadsto.
\]
So, by \Cref{lemma:saturation-inequality}, 
\[
\mu(\s{t}{\kappa}) \leq 2  + 2 \left( \sum_{i=1}^n    \mu(\s{a_1\cdots a_i}{\kappa}) +  \mu(\s{a_{i+1}\cdots a_n}{\kappa})\right) 
\]
and, by the induction hypothesis, the right-hand-side is finite.

If $t= \ro{u}$, then, by the saturation conditions, 
\[
\{  \s{t}{\kappa} \}^\leadsto  = \{  \s{t}{\kappa}, \s{t}{\last(t)} \}  \cup \{  \s{u}{\kappa} \}^\leadsto \cup \{  \s{u}{\last(t)} \}^\leadsto.
\]
So we get 
\(
\mu(\s{t}{\kappa}) \leq 2 +  2\mu(\s{u}{\kappa})
\)
and, by the induction hypothesis, the right-hand-side is finite.

If $t = \rr{u}$, then clearly  
\[
 \mu( \s{t}{\kappa}) = \mu( \s{u}{\suc(\s{t}{\kappa})}, \s{u}{\s{t}{\kappa}},\s{u}{\suc(\s{t}{\last(t)})}, \s{u}{\s{t}{\last(t)}})
 \]
 and, by applying \Cref{lemma:saturation-inequality} for $\alpha = \s{t}{\kappa}$, $\gamma_1 = \s{u}{\suc(\s{t}{\last(t)})}$, and $\gamma_2 =  \s{u}{\s{t}{\last(t)}}$, 
 \[
 \mu( \s{t}{\kappa}) \leq \mu( \s{u}{\suc(\kappa)}, \s{u}{\kappa},\gamma_1,\gamma_2)  + \mu_\alpha( \s{u}{\suc(\alpha)}, \s{u}{\alpha},\gamma_1,\gamma_2).
\]
But applying  \Cref{lemma:saturation-inequality} again  for  $\alpha' = \s{t}{\last(t)}$ with $\gamma_1' = \s{u}{\suc(\kappa)}$, $\gamma_2' = \s{u}{\kappa}$, and a new time variable $\kappa'$, 
\begin{align*}
\mu( \s{u}{\suc(\kappa)}, \s{u}{\kappa},\gamma_1,\gamma_2)  &\leq \mu(\s{u}{\suc(\kappa')}, \s{u}{\kappa'}, \gamma_1',\gamma_2')  + \mu_{\alpha'}( \s{u}{\suc(\alpha')}, \s{u}{\alpha'},\gamma_1',\gamma_2') \\
&\leq 2\mu( \s{u}{\suc(\kappa)}) +2\mu( \s{u}{\kappa}) + \mu_{\alpha'}( \s{u}{\suc(\alpha')}, \s{u}{\alpha'},\gamma_1',\gamma_2').
\end{align*}
In summary,
\[
\mu( \s{t}{\kappa})  \leq 2\mu( \s{u}{\suc(\kappa)}) +2\mu( \s{u}{\kappa}) + \mu_{\alpha'}( \s{u}{\suc(\alpha')}, \s{u}{\alpha'},\gamma_1',\gamma_2')  +  \mu_\alpha( \s{u}{\suc(\alpha)}, \s{u}{\alpha},\gamma_1,\gamma_2).
\]
By the induction hypothesis, the sum  $2\mu( \s{u}{\suc(\kappa)}) +2\mu( \s{u}{\kappa})$ is finite. But also
\begin{align*}
M_{ \s{t}{\kappa}} ( \s{u}{\suc(\s{t}{\kappa})}, \s{u}{\s{t}{\kappa}},\s{u}{\suc(\s{t}{\last(t)})}, \s{u}{\s{t}{\last(t)}}) & = \{\s{t}{\kappa}, \kappa  \} \\
M_{ \s{t}{\last(t)}} (\s{u}{\suc(\s{t}{\last(t)})}, \s{u}{\s{t}{\last(t)}}, \s{u}{\suc(\kappa)}, \s{u}{\kappa}) & = \{\s{t}{\last(t)}, \last(t)\}.
\end{align*}
So $\mu(\s{t}{\kappa}) $ is finite. 

The case where $t = \rl{u}$ is analogous to the case where $t = \rr{u}$.  \qed

\medskip
Note that this proof yields a rough upper-bound $\mu(\s{t}{\kappa}) \leq (6\cdot c(t))^{c(t)}$, where $c(t)$ is the complexity of the term $t$.


\subsection{Proof of~\Cref{proposition:valuation-to-diagram}}\label{section:proof_delta_is_a_diagram}

To conclude the proof of~\Cref{proposition:valuation-to-diagram}, it remains to prove that $\delta$ is a diagram, i.e., that $\delta$ satisfies conditions~\Cref{c:mon}-\Cref{c:l-last-value-finite}. For convenience, we assume without further mention that all samples used are in $\Delta$, and write $\sem{t}$ for $\sem{t}_\theta$.

 \begin{newlist}
 
 \item[\Cref{c:mon}] 
If $\delta(\alpha) \leq \delta(\beta)$, then, by the definition of $\delta$ and the fact that time warps are monotonic, $\delta(\s{t}{\alpha}) = \sem{t}(\delta(\alpha)) \leq \sem{t}(\delta(\beta)) = \delta(\s{t}{\beta})$.
 
 \item[\Cref{c:zero}] 
 If $\delta(\alpha) = 0$, then $\delta(\s{t}{\alpha}) = \sem{t}(\delta(\alpha)) = 0$.

 \item[\Cref{c:pre}] 
 By the definition of $\delta$.
 
 \item[\Cref{c:suc}] 
 By the definition of $\delta$.
 
 \item[\Cref{c:last}] 
 By the definition of $\delta$, 
  \[
 \delta(\last(t)) = \lastw(\sem{t}) = \min\{ n\in \som \mid \sem{t}(n) = \sem{t}(\om) \}.
 \]
 So clearly, for each $k\in \som$,
 \[
 \lastw(\sem{t}) \leq k \iff \sem{t}(\lastw(\sem{t})) = \sem{t}(k) .
 \]
 Hence, for all $\s{t}{\alpha} \in \Delta$,
 \[
 \delta(\last(t)) \leq \delta(\alpha) \iff \delta(\s{t}{\last(t)}) = \delta(\s{t}{\alpha}).
 \]

 \item[\Cref{c:last2}] 
If $\lastw(\sem{t}) = \delta(\last(t)) = \om$, then $\sem{t}(n) < \sem{t}(\om)$ for all $n<\om$, and $\delta(\s{t}{\last(t)}) = \sem{t}(\om) = \bigvee_{n<\om}\sem{t}(n) =\om$.

 \item[\Cref{c:id}] 
$\delta(\s{\id}{\alpha}) =\sem{\id}(\delta(\alpha))  = \delta(\alpha)$.
 
 \item[\Cref{c:bot}] 
$\delta(\last(\bot)) = \lastw(\sem{\bot}) =0$.
 
 \item[\Cref{c:prod}] 
$\delta(\s{tu}{\alpha}) = \sem{tu}(\delta(\alpha)) = \sem{t}(\sem{u}(\delta(\alpha))) = \delta(\s{t}{\s{u}{\alpha}})$.

 \item[\Cref{c:last-prod1}] 
If $\delta(\last(tu)) = \om$, then $\lastw(\sem{t}\sem{u}) = \om$ and, by \Cref{lemma:last-properties}, $\delta(\last(t)) = \lastw(\sem{t}) = \om$ and $\delta(\last(u)) =\lastw(\sem{u}) = \om$.

 \item[\Cref{c:o-values}] 
By the definition of $\delta$, we have $\delta(\s{\ro{t}}{\alpha}) = \ro{\sem{t}}(\delta(\alpha))$. Moreover, by \Cref{lemma:op-o}, we have $\delta(\s{\ro{t}}{\alpha}) = \ro{\sem{t}}(\delta(\alpha)) = 0$ or $\delta(\s{\ro{t}}{\alpha}) =\ro{\sem{t}}(\delta(\alpha)) = \om$.
 
 \item[\Cref{c:o-inf}] 
If $\delta(\alpha) <\om$, then, by \Cref{lemma:op-o},
  \[
  \delta(\s{\ro{t}}{\alpha}) = \ro{\sem{t}}(\delta(\alpha)) = \om \iff  \delta(\s{t}{\alpha}) = \sem{t}(\delta(\alpha)) = \om.
  \]

 \item[\Cref{c:o-last-finite}] 
$\delta(\last(\ro{t})) =\lastw(\ro{ \sem{t}}) <\om$, by \Cref{lemma:op-o}.
  
 \item[\Cref{c:o-last-value-finite}]
  Suppose that $\ro{\sem{t}}(\lastw(\ro{\sem{t}})) = \delta(\s{\ro{t}}{\last(\ro{t})}) <\om$. Then, since $\ro{\sem{t}}(\lastw(\ro{\sem{t}}))  = \ro{\sem{t}}(\om)$, by \Cref{lemma:op-o}, we get $\sem{t}(k) <\om$ for all $k<\om$. So in particular for all $\delta(\alpha) <\om$, we have $\delta(\s{t}{\alpha}) = \sem{t}(\delta(\alpha)) <\om$.
 
 \item[\Cref{c:r-lower}] 
 $\delta(\s{t}{\s{\rr{t}}{\alpha}}) = \sem{t}(\rr{\sem{t}}(\delta(\alpha))) \leq \delta(\alpha)$, by \Cref{lemma:op-r}.
 
 \item[\Cref{c:r-finite}] 
If $0<\delta(\alpha) <\om$ and $\rr{\sem{t}}(\delta(\alpha)) = \delta(\s{\rr{t}}{\alpha}) <\om$, then $\delta(\alpha) < \sem{t}(\rr{\sem{t}}(\delta(\alpha)) + 1) = \delta(\s{t}{\suc(\s{\rr{t}}{\alpha})})$, by \Cref{lemma:op-r}.

 \item[\Cref{c:r-last-inf}] 
If $\lastw(\rr{\sem{t}}) = \delta(\last(\rr{t})) = \om$, then $\delta(\last(t)) = \lastw(\sem{t}) = \om$, by \Cref{lemma:last-properties}.

 \item[\Cref{c:r-last-value-finite}] 
If $\rr{\sem{t}}(\lastw(\rr{\sem{t}})) = \delta(\s{\rr{t}}{\last(\rr{t})}) <\om$, then $ \rr{\sem{t}}(\om) = \rr{\sem{t}}(\lastw(\rr{\sem{t}}))<\om$ and $\delta(\s{t}{\suc(\s{\rr{t}}{\last(\rr{t})})}) = \sem{t}(\rr{\sem{t}}(\om) + 1) = \om$, by \Cref{lemma:op-r}.
 
 \item[\Cref{c:l-finite1}] 
If $\rl{\sem{t}}(\delta(\alpha)) = \delta(\s{\rl{t}}{\alpha}) <\om$, then either $\delta(\alpha) = 0$ and $\delta(\s{t}{\s{\rl{t}}{\alpha}}) = \sem{t}(\rl{\sem{t}}(0)) = 0$, or $0<\delta(\alpha) <\om$ and $\delta(\alpha) \leq \sem{t}(\rl{\sem{t}}(\delta(\alpha))) = \delta(\s{t}{\s{\rl{t}}{\alpha}})$, by \Cref{lemma:op-l}.

 \item[\Cref{c:l-finite2}] 
If $0<\delta(\alpha) <\om$ and  $\rl{\sem{t}}(\delta(\alpha)) = \delta(\s{\rl{t}}{\alpha}) <\om$, then $\delta(\s{t}{\pre(\s{\rl{t}}{\alpha})}) = \sem{t}(\rl{\sem{t}}(\delta(\alpha)) -1) < \delta(\alpha)$, by \Cref{lemma:op-l}.
 
 \item[\Cref{c:l-inf}] 
If $\delta(\alpha) <\om$  and $\rl{\sem{t}}(\delta(\alpha)) = \delta(\s{\rl{t}}{\alpha}) = \om$, then $\delta(\alpha) > 0$ and $\delta(\s{t}{\s{\rl{t}}{\alpha}} = \sem{t}(\om)  <\delta(\alpha)$, by \Cref{lemma:op-l}.

 \item[\Cref{c:l-last-inf}] 
If $\lastw(\rl{\sem{t}}) = \delta(\last(\rl{t})) = \om$, then $\delta(\last(t)) = \lastw(\sem{t}) = \om$, by \Cref{lemma:last-properties}.

 \item[\Cref{c:l-last-value-finite}] 
If $\rl{\sem{t}}(\lastw(\rl{\sem{t}})) = \delta(\s{\rl{t}}{\last(\rl{t})}) <\om$, then $\rl{\sem{t}}(\om) =  \rl{\sem{t}}(\lastw(\rl{\sem{t}})) <\om$ and 
 $\delta(\s{t}{\s{\rl{t}}{\last(\rl{t})}}) = \sem{t}(\rl{\sem{t}}(\om) ) = \om$, by \Cref{lemma:op-l}.

\end{newlist}


\subsection{Proof of~\Cref{lemma:fundamental}}\label{section:fundamental}

Recall that the proof of~\Cref{lemma:fundamental} proceeds by induction on $t$ and that the case~$t = x$ follows by assumption. The other cases are direct consequences of the following lemmas and the induction hypothesis.

\begin{lem} \label{lemma:fundamental-product}
If~$f_1$ strongly extends~$\diag{t_1}{\delta}$ and~$f_2$ strongly extends~$\diag{t_2}{\delta}$, then~$f_1 f_2$ strongly extends~$\diag{t_1 t_2}{\delta}$.
\end{lem}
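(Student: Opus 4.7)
The plan is to unfold the two requirements of strong extension and verify each one using the saturation of $\Delta$ together with the properties of diagrams established in Definitions~\ref{diagram-structural}--\ref{diagram-l}, plus the structural identity from \Cref{lemma:last-properties}(a).

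First I would verify that $f_1 f_2$ extends $\diag{t_1 t_2}{\delta}$. Pick an arbitrary $(i,j) \in \diag{t_1 t_2}{\delta}$; by definition, $i = \delta(\alpha)$ and $j = \delta(\s{t_1 t_2}{\alpha})$ for some $\s{t_1 t_2}{\alpha} \in \Delta$. By the saturation rule $\s{tu}{\alpha} \leadsto \s{t}{\s{u}{\alpha}}$, both $\s{t_1}{\s{t_2}{\alpha}}$ and $\s{t_2}{\alpha}$ lie in $\Delta$, so $(\delta(\alpha), \delta(\s{t_2}{\alpha})) \in \diag{t_2}{\delta}$ and $(\delta(\s{t_2}{\alpha}), \delta(\s{t_1}{\s{t_2}{\alpha}})) \in \diag{t_1}{\delta}$. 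Applying the extension property of $f_2$ then $f_1$ successively yields $(f_1 f_2)(i) = \delta(\s{t_1}{\s{t_2}{\alpha}})$, which by condition \cref{c:prod} equals $\delta(\s{t_1 t_2}{\alpha}) = j$.

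Next I would handle the strong-extension clause. Assume $\diag{t_1 t_2}{\delta} \neq \emptyset$ and $\delta(\last(t_1 t_2)) = \om$, and fix some $\s{t_1 t_2}{\alpha} \in \Delta$. By the saturation rule $\s{t}{\alpha} \leadsto \s{t}{\last(t)}$, we have $\s{t_1 t_2}{\last(t_1 t_2)} \in \Delta$, and condition \cref{c:last-prod1} therefore forces $\delta(\last(t_1)) = \delta(\last(t_2)) = \om$. The same saturation argument used in the previous paragraph also shows $\diag{t_1}{\delta}$ and $\diag{t_2}{\delta}$ are nonempty, so the strong-extension hypotheses on $f_1$ and $f_2$ give $\lastw(f_1) = \lastw(f_2) = \om$. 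A direct appeal to \Cref{lemma:last-properties}(a) then yields $\lastw(f_1 f_2) = \om$, which is exactly what is needed.

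There is no real obstacle here: the proof is a careful bookkeeping of saturation closure plus one use each of \cref{c:prod}, \cref{c:last-prod1}, and \Cref{lemma:last-properties}(a). The one subtlety worth checking is that all the auxiliary samples invoked (namely $\s{t_1}{\s{t_2}{\alpha}}$, $\s{t_2}{\alpha}$, and $\s{t_1 t_2}{\last(t_1 t_2)}$) really do belong to $\Delta$, but this follows immediately from the definition of saturation.
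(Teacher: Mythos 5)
Your proof is correct and follows essentially the same approach as the paper: use the saturation rule $\s{t u}{\alpha} \leadsto \s{t}{\s{u}{\alpha}}$ together with \cref{c:prod} for the extension part, and \cref{c:last-prod1} plus \Cref{lemma:last-properties}(a) for the strong-extension clause. Your explicit check that $\diag{t_1}{\delta}$ and $\diag{t_2}{\delta}$ are nonempty is a small point the paper's proof leaves implicit; it is indeed needed before invoking the strong-extension hypotheses on $f_1$ and $f_2$.
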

\begin{proof}
Suppose that $f_1$ strongly extends~$\diag{t_1}{\delta}$ and~$f_2$ strongly extends~$\diag{t_2}{\delta}$. Then for all~$\s{t_1 t_2}{\alpha} \in \Delta$, 
  \begin{align*}
    f_1 f_2 (\delta(\alpha))
    & =
    f_1 (f_2(\delta(\alpha)))
    & \mbox{(by definition)}
    \\
    & =
    f_1(\delta(\s{t_2}{\alpha}))
    & \mbox{(since $f_2$ extends~$\diag{t_2}{\delta}$)}
    \\
    & =
    \delta(\s{t_1}{\s{t_2}{\alpha}})
    & \mbox{(since $f_1$ extends~$\diag{t_1}{\delta}$)}
    \\
    & =
    \delta(\s{t_1 t_2}{\alpha})
    & \mbox{(by \cref{c:prod})}.
  \end{align*}
 So~$f_1 f_2$ extends~$\diag{t_1 t_2}{\delta}$, and it remains to show that the extension is strong. We can assume that $\diag{t_1 t_2}{\delta}$ is non-empty, since otherwise there is nothing to prove. Suppose that~$\delta(\last(t_1 t_2)) = \om$. Then $\delta(\last(t_1)) = \delta(\last(t_2)) = \om$, by~\cref{c:last-prod1}, and, since $f_1$ and $f_2$ strongly extend $\diag{t_1}{\delta}$ and $\diag{t_2}{\delta}$, respectively, also $\lastw(f_1) = \lastw(f_2) = \om$. Hence $\lastw(f_1 f_2) = \om$, by~\Cref{lemma:last-properties}.
\end{proof}

\begin{lem} \label{lemma:fundamental-o}
 If~$f$ strongly extends~$\diag{t}{\delta}$, then~$\ro{f}$ strongly extends~$\diag{\ro{t}}{\delta}$.
\end{lem}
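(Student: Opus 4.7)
My plan is to first observe that the ``strong'' part of the conclusion is automatic: by~\cref{c:o-last-finite}, $\delta(\last(\ro{t})) < \om$ whenever $\last(\ro{t}) \in \Delta$, so the premise $\delta(\last(\ro{t})) = \om$ in the definition of strong extension is never satisfied. It therefore suffices to prove that $\ro{f}$ extends $\diag{\ro{t}}{\delta}$, i.e., that $\ro{f}(\delta(\alpha)) = \delta(\s{\ro{t}}{\alpha})$ for every $\s{\ro{t}}{\alpha} \in \Delta$.

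I would then proceed by case analysis on the value of $\delta(\alpha) \in \som$. When $\delta(\alpha) = 0$, both sides equal $0$ by~\cref{c:zero}. When $\delta(\alpha) \in \om \setminus \{0\}$, \Cref{lemma:op-o} says that $\ro{f}(\delta(\alpha))$ equals $\om$ if $f(\delta(\alpha)) = \om$ and equals $0$ otherwise; since $f$ extends $\diag{t}{\delta}$, this matches $\delta(\s{\ro{t}}{\alpha})$ by combining~\cref{c:o-values} with~\cref{c:o-inf}.

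The remaining case $\delta(\alpha) = \om$ is the subtle one. Using~\cref{c:o-last-finite} together with~\cref{c:last}, I would first reduce the goal to $\ro{f}(\om) = \delta(\s{\ro{t}}{\last(\ro{t})})$, whose right-hand side is $0$ or $\om$ by~\cref{c:o-values}. If it is $\om$, then~\cref{c:o-inf} applied at $\last(\ro{t})$ (where $\delta$ is finite) gives $\delta(\s{t}{\last(\ro{t})}) = \om$, whence $f(\delta(\last(\ro{t}))) = \om$ and~\Cref{lemma:op-o} yields $\ro{f}(\om) = \om$. If it is $0$, I need to show $f(k) < \om$ for every $k \in \om$, and for this I would split on $\delta(\last(t))$, noting that saturation via $\s{\ro{t}}{\alpha} \leadsto \s{t}{\alpha}$ puts $\s{t}{\alpha}$ and hence $\last(t)$ in $\Delta$. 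When $\delta(\last(t)) = \om$, the strong-extension hypothesis gives $\lastw(f) = \om$, which by monotonicity rules out $f(k) = \om$ for any $k \in \om$. When $\delta(\last(t)) < \om$, \cref{c:last} gives $\delta(\s{t}{\alpha}) = \delta(\s{t}{\last(t)})$ and~\cref{c:o-last-value-finite} applied at $\last(t)$ shows $\delta(\s{t}{\last(t)}) < \om$; hence $f(\om) = f(\delta(\alpha)) = \delta(\s{t}{\alpha}) < \om$, and monotonicity finishes the argument.

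The main obstacle is precisely this last sub-case: one has to bridge from the purely diagrammatic fact $\delta(\s{\ro{t}}{\last(\ro{t})}) = 0$ to the pointwise statement that $f$ is everywhere finite on $\om$, even though the diagram only constrains $f$ on the image of $\delta$. The combination of~\cref{c:o-last-value-finite},~\cref{c:last}, and the strong-extension hypothesis on $f$ is exactly what makes this step go through.
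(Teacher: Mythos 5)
Your proof is correct and follows essentially the same route as the paper's. The only organizational differences are cosmetic: you peel off the $\delta(\alpha)=0$ case explicitly (the paper folds it into the $\delta(\alpha)<\om$ case, where $\ro{f}(0)=0$ holds trivially even though \Cref{lemma:op-o} is stated for $n\neq 0$), and in the delicate subcase $\delta(\alpha)=\om$ with $\delta(\s{\ro{t}}{\last(\ro{t})})=0$ you branch on $\delta(\last(t))$ where the paper branches on $\delta(\s{t}{\alpha})$. These two splits are equivalent — by~\cref{c:last} and~\cref{c:last2}, $\delta(\last(t))=\om$ iff $\delta(\s{t}{\alpha})=\om$ when $\delta(\alpha)=\om$ — and both end up combining~\cref{c:o-last-value-finite} with the strong-extension hypothesis on $f$ to conclude that $f$ is finite on all of $\om$. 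So this is the same argument, correctly executed.
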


\begin{proof}
Suppose that $f$ strongly extends~$\diag{t}{\delta}$ and consider any~$\s{\ro{t}}{\alpha} \in \Delta$. We prove that~$\ro{f}(\delta(\alpha)) = \delta(\s{\ro{t}}{\alpha})$.  Suppose first that $\delta(\alpha) <\om$. We reason by cases for~$\delta(\s{\ro{t}}{\alpha})$.
   
\begin{enumerate}

\item[\rm (i)] $\delta(\s{\ro{t}}{\alpha}) = \om$. Then~$\delta(\s{t}{\alpha}) = \om$, by~\cref{c:o-inf}, and, since~$f$ extends~$\diag{t}{\delta}$, also~$f(\delta(\alpha)) = \om$. Hence $\ro{f}(\delta(\alpha)) = \om$, by~\Cref{lemma:op-o}.

\item[\rm (ii)] $\delta(\s{\ro{t}}{\alpha})<\om$. Then $\delta(\s{\ro{t}}{\alpha}) = 0$, by~\cref{c:o-values}, and hence~$\delta(\s{t}{\alpha})<\om$,  
by~\cref{c:o-inf}. Since~$f$ extends~$\diag{t}{\delta}$, also~$f(\delta(\alpha)) <\om$. Hence~$\ro{f}(\delta(\alpha)) = 0$, by~\Cref{lemma:op-o}.

\end{enumerate}

\noindent
Now suppose that~$\delta(\alpha) = \om$. Then $\delta(\s{\ro{t}}{\alpha}) = \delta(\s{\ro{t}}{\last(\ro{t})})$, by~\cref{c:last}, and $\delta(\last(\ro{t})) <\om$, by~\cref{c:o-last-finite}. As in the previous cases,~$\ro{f}(\delta(\last(\ro{t}))) = \delta(\s{\ro{t}}{\last(\ro{t})})$, recalling that by~\cref{c:o-values}, either $\delta(\s{\ro{t}}{\alpha}) = \om$ or $\delta(\s{\ro{t}}{\alpha}) = 0$.

\begin{enumerate}

\item $\delta(\s{\ro{t}}{\last(\ro{t})}) =\delta(\s{\ro{t}}{\alpha}) = \om$. Then~$\ro{f}(\delta(\last(\ro{t}))) = \om$, and~$\ro{f}(\om) = \om$.

\item $\delta(\s{\ro{t}}{\last(\ro{t})}) =\delta(\s{\ro{t}}{\alpha}) = 0$. Then there are two cases. If $\delta(\s{t}{\alpha}) <\om$, then, since $f$ extends $\diag{t}{\delta}$, we have $f(\om) <\om$ and $\ro{f}(\om) = 0$, by \Cref{lemma:op-o}. Otherwise, $\delta(\s{t}{\alpha}) = \om$. In this case, $\delta(\s{t}{\beta})<\om$ for all $\delta(\beta) <\om$ with $\s{t}{\beta}\in \Delta$,  by \cref{c:o-last-value-finite}, so~$\delta(\last(t)) = \om$. Hence, since $f$ strongly extends $\diag{t}{\delta}$, we have $\lastw(f) = \om$ and~$\ro{f}(\om) = 0$, by \Cref{lemma:op-o}.

\end{enumerate}

That $\ro{f}$ strongly extends~$\diag{\ro{t}}{\delta}$ is clear, since $\delta(\last(\ro{t})) <\om$, by \cref{c:o-last-finite}. 
\end{proof}

\begin{lem}\label{lemma:fundamental-r} 
If~$f$ strongly extends~$\diag{t}{\delta}$, then~$\rr{f}$ strongly extends~$\diag{\rr{t}}{\delta}$.
\end{lem}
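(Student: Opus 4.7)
The plan is to follow the case-analysis template of Lemma~\ref{lemma:fundamental-o}, using \Cref{lemma:op-r} to characterize the value of $\rr{f}(\delta(\alpha))$ and then verifying that it matches $\delta(\s{\rr{t}}{\alpha})$ by invoking the appropriate $r$-soundness clauses. Fix $\s{\rr{t}}{\alpha}\in\Delta$. The case $\delta(\alpha)=0$ is immediate from \cref{c:zero} together with $\rr{f}(0)=0$. For $0<\delta(\alpha)<\om$, I would split on whether $\delta(\s{\rr{t}}{\alpha})$ is a natural number $m$ or $\om$. In the finite subcase, \cref{c:r-lower} and the fact that $f$ extends $\diag{t}{\delta}$ give $f(m)=\delta(\s{t}{\s{\rr{t}}{\alpha}})\le\delta(\alpha)$, while \cref{c:r-finite} gives $\delta(\alpha)<\delta(\s{t}{\suc(\s{\rr{t}}{\alpha})})=f(m+1)$; by \Cref{lemma:op-r} this forces $\rr{f}(\delta(\alpha))=m$. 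In the infinite subcase, \cref{c:r-lower} yields $f(\om)=\delta(\s{t}{\s{\rr{t}}{\alpha}})\le\delta(\alpha)$, hence $\rr{f}(\delta(\alpha))=\om$ again by \Cref{lemma:op-r}.

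The case $\delta(\alpha)=\om$ is the main obstacle, and it requires chaining several saturation clauses. By \cref{c:last}, $\delta(\s{\rr{t}}{\alpha})=\delta(\s{\rr{t}}{\last(\rr{t})})$, so it suffices to compute $\rr{f}(\om)$ and compare it with $\delta(\s{\rr{t}}{\last(\rr{t})})$. I would split on $\delta(\last(\rr{t}))$. If $\delta(\last(\rr{t}))=\om$, then \cref{c:last2} gives $\delta(\s{\rr{t}}{\last(\rr{t})})=\om$, while \cref{c:r-last-inf} together with the strong-extension hypothesis on $f$ yields $\lastw(f)=\om$; the characterization of $\rr{f}(\om)$ in \Cref{lemma:op-r} then evaluates to $\om$ in either of its two branches. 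If $\delta(\last(\rr{t}))<\om$, I split further on $\delta(\s{\rr{t}}{\last(\rr{t})})$: in the finite subcase, \cref{c:r-last-value-finite} supplies $f(m+1)=\om$, and re-applying the finite-$\alpha$ analysis at $\alpha=\last(\rr{t})$ supplies a witness $k\in\om$ with $\rr{f}(k)=m$, so $\rr{f}(\om)=m$ by \Cref{lemma:op-r}; in the infinite subcase the same finite-$\alpha$ analysis gives $\rr{f}(\delta(\last(\rr{t})))=\om$, whence $f(\om)\le\delta(\last(\rr{t}))<\om$, so $\rr{f}(\om)=\om$ again by \Cref{lemma:op-r}.

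Finally, the strong extension clause is essentially free: if $\diag{\rr{t}}{\delta}$ is non-empty and $\delta(\last(\rr{t}))=\om$, then \cref{c:r-last-inf} together with the strong extension of $f$ yields $\lastw(f)=\om$, and \Cref{lemma:last-properties}(b) then gives $\lastw(\rr{f})=\om$. The delicate point, as noted, is ensuring that the $\delta(\alpha)=\om$ case is dispatched by correctly combining \cref{c:last}, \cref{c:last2}, \cref{c:r-last-inf}, and \cref{c:r-last-value-finite} with the previously established finite-$\alpha$ identity to cover every branch of the characterization of $\rr{f}(\om)$ in \Cref{lemma:op-r}.
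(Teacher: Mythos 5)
Your proof is correct and follows essentially the same route as the paper: the same case split on $\delta(\alpha)$, the same appeals to \cref{c:zero}, \cref{c:r-lower}, \cref{c:r-finite}, \cref{c:last}, \cref{c:last2}, \cref{c:r-last-inf}, \cref{c:r-last-value-finite}, and \Cref{lemma:op-r}, and the same treatment of strong extension via \cref{c:r-last-inf} and \Cref{lemma:last-properties}. The only difference is cosmetic: in the $\delta(\alpha)=\om$ case you split first on $\delta(\last(\rr{t}))$ and then on $\delta(\s{\rr{t}}{\last(\rr{t})})$, whereas the paper takes the two splits in the opposite order, yielding the same leaf cases.
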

\begin{proof}
Let $\s{\rr{t}}{\alpha} \in \Delta$. Note first that, by \cref{c:zero}, if $\delta(\alpha) = 0$, then $\delta(\s{\rr{t}}{\alpha}) = 0=\rr{f}(0)$. Hence assume  that $\delta(\alpha)> 0$. Suppose first that $\delta(\alpha) <\om$. We reason by cases for $\delta(\s{\rr{t}}{\alpha})$.
  
  \begin{enumerate}
  \item $\delta(\s{\rr{t}}{\alpha})  <\om$. Then  $\delta(\s{t}{\s{\rr{t}}{\alpha}})\leq \delta(\alpha) < \delta(\s{t}{\suc(\s{\rr{t}}{\alpha})})$, by \cref{c:r-lower} and \cref{c:r-finite}.  Since $f$ extends~$\diag{t}{\delta}$, we have $f(\delta(\s{\rr{t}}{\alpha})) =\delta(\s{t}{\s{\rr{t}}{\alpha}}) \le \delta(\alpha) < \delta(\s{t}{\suc(\s{\rr{t}}{\alpha})}) = f(\delta(\s{\suc(\rr{t}}{\alpha})))$, and, by~\cref{c:suc}, also $f(\delta(\s{\suc(\rr{t}}{\alpha})))= f(\delta(\s{\rr{t}}{\alpha})+1) $.  So $\delta(\s{\rr{t}}{\alpha}) =\rr{f}(\delta(\alpha))$, by \Cref{lemma:op-r}.
   
   \item $\delta(\s{\rr{t}}{\alpha}) = \om$. Then, since $\delta(\s{t}{\s{\rr{t}}{\alpha}}) \leq \delta(\alpha)$, by \cref{c:r-lower},  and $f$ extends~$\diag{t}{\delta}$, we have $f(\om) =  f(\delta(\s{\rr{t}}{\alpha})) \leq \delta(\alpha) <\om$. Hence $\rr{f}(\delta(\alpha)) = \om$, by \Cref{lemma:op-r}.
  \end{enumerate}
  
 \noindent 
 Now suppose that $\delta(\alpha) = \om$ and hence $\delta(\s{\rr{t}}{\alpha}) =\delta(\s{\rr{t}}{\last(\rr{t})})$. We reason by cases for $\delta(\s{\rr{t}}{\alpha})$.
  
  \begin{enumerate}
  \item $\delta(\s{\rr{t}}{\last(\rr{t})}) =\delta(\s{\rr{t}}{\alpha})  <\om$. Then  $\delta(\last(\rr{t})) <\om$ and $\delta(\s{t}{\suc(\s{\rr{t}}{\last(\rr{t})})}) = \om$, by \cref{c:last2} and \cref{c:r-last-value-finite}. So, using the previous cases, \cref{c:suc}, and the fact that $f$ extends~$\diag{t}{\delta}$, we get $\rr{f}(\delta(\last(\rr{t}))) = \delta(\s{\rr{t}}{\alpha})$ and $f(\delta(\s{\rr{t}}{\alpha})+1) = \om$. Hence $\rr{f}(\delta(\alpha)) = \delta(\s{\rr{t}}{\alpha})$, by \Cref{lemma:op-r}.
  
  \item $\delta(\s{\rr{t}}{\last(\rr{t})}) =\delta(\s{\rr{t}}{\alpha})  =\om$. Then there are two cases. If $\delta(\last(\rr{t})) <\om$, then, using the previous cases, $\rr{f}(\delta(\last(\rr{t}))) = \om$, and hence  $\rr{f}(\om) = \om$ by \Cref{lemma:op-r}. Otherwise $\delta(\last(\rr{t})) = \om$. Then $\delta(\last(t)) = \om$, by \cref{c:r-last-inf}, and since $f$ strongly extends~$\diag{t}{\delta}$, we have $\lastw(f) = \om$. Hence $\rr{f}(\om) = \om$, by \Cref{lemma:op-r}.
  \end{enumerate}
  It remains to show that the extension is strong. Again we can assume that $\diag{\rr{t}}{\delta}$ is non-empty. Suppose that $\delta(\last(\rr{t}))  =\om$. Then $\delta(\last(t)) = \om$, by \cref{c:r-last-inf}, and, since $f$ strongly extends~$\diag{t}{\delta}$, also $\lastw(f) = \om$.  Hence $\lastw(\rr{f}) = \om$, by \Cref{lemma:last-properties}.
\end{proof}
\begin{lem}\label{lemma:fundamental-l} 
If~$f$ strongly extends~$\diag{t}{\delta}$, then~$\rl{f}$ strongly extends~$\diag{\rl{t}}{\delta}$.
\end{lem}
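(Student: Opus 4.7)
The plan is to mirror the structure of the proof of \Cref{lemma:fundamental-r}, performing a case analysis on $\delta(\alpha)$ for each $\s{\rl{t}}{\alpha} \in \Delta$ and invoking the characterization of $\rl{f}$ from \Cref{lemma:op-l}. First, handle the trivial case $\delta(\alpha) = 0$: \cref{c:zero} gives $\delta(\s{\rl{t}}{\alpha}) = 0 = \rl{f}(0)$, so this case is immediate.

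Next, assume $0 < \delta(\alpha) < \om$ and split on whether $\delta(\s{\rl{t}}{\alpha})$ is finite. If $\delta(\s{\rl{t}}{\alpha}) < \om$, combine \cref{c:l-finite1}, which gives $\delta(\alpha) \le \delta(\s{t}{\s{\rl{t}}{\alpha}}) = f(\delta(\s{\rl{t}}{\alpha}))$, with \cref{c:l-finite2} and \cref{c:pre}, which give $f(\delta(\s{\rl{t}}{\alpha}) \ominus 1) = \delta(\s{t}{\pre(\s{\rl{t}}{\alpha})}) < \delta(\alpha)$, so that $f(\delta(\s{\rl{t}}{\alpha}) - 1) < \delta(\alpha) \le f(\delta(\s{\rl{t}}{\alpha}))$; \Cref{lemma:op-l} then yields $\rl{f}(\delta(\alpha)) = \delta(\s{\rl{t}}{\alpha})$. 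If instead $\delta(\s{\rl{t}}{\alpha}) = \om$, apply \cref{c:l-inf} to conclude $f(\om) = \delta(\s{t}{\s{\rl{t}}{\alpha}}) < \delta(\alpha)$, so \Cref{lemma:op-l} gives $\rl{f}(\delta(\alpha)) = \om$.

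Now suppose $\delta(\alpha) = \om$, so \cref{c:last} yields $\delta(\s{\rl{t}}{\alpha}) = \delta(\s{\rl{t}}{\last(\rl{t})})$. If this value is finite, \cref{c:l-last-value-finite} ensures that $f(\delta(\s{\rl{t}}{\alpha})) = \delta(\s{t}{\s{\rl{t}}{\last(\rl{t})}}) = \om$, and the previous cases applied at $\delta(\last(\rl{t}))$ give $\rl{f}(\delta(\last(\rl{t}))) = \delta(\s{\rl{t}}{\alpha})$, so \Cref{lemma:op-l} gives $\rl{f}(\om) = \delta(\s{\rl{t}}{\alpha})$. If instead $\delta(\s{\rl{t}}{\last(\rl{t})}) = \om$, split on $\delta(\last(\rl{t}))$: when finite, previous cases give $\rl{f}(\delta(\last(\rl{t}))) = \om$, hence $\rl{f}(\om) = \om$ by \Cref{lemma:op-l}; when infinite, \cref{c:l-last-inf} gives $\delta(\last(t)) = \om$, so the strong extension hypothesis on $f$ yields $\lastw(f) = \om$, and \Cref{lemma:op-l} again gives $\rl{f}(\om) = \om$.

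Finally, for the strong extension clause, assume $\diag{\rl{t}}{\delta} \ne \emptyset$ and $\delta(\last(\rl{t})) = \om$; then \cref{c:l-last-inf} gives $\delta(\last(t)) = \om$, and strong extension of $f$ gives $\lastw(f) = \om$, whence $\lastw(\rl{f}) = \om$ by \Cref{lemma:last-properties}(b). The main obstacle is bookkeeping: the case $\delta(\alpha) = \om$ with $\delta(\s{\rl{t}}{\alpha}) < \om$ requires careful use of \cref{c:l-last-value-finite} to extract that $f$ actually jumps to $\om$ at the finite value $\delta(\s{\rl{t}}{\alpha})$, which is precisely what \Cref{lemma:op-l} needs to identify $\rl{f}(\om)$ with that value; all other subcases follow the template of \Cref{lemma:fundamental-r} essentially verbatim.
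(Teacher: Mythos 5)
Your proof follows the paper's argument essentially step by step: the same case split on $\delta(\alpha)$, the same soundness conditions (\cref{c:zero}, \cref{c:l-finite1}, \cref{c:l-finite2}, \cref{c:pre}, \cref{c:l-inf}, \cref{c:l-last-value-finite}, \cref{c:l-last-inf}), and the same appeals to \Cref{lemma:op-l} and \Cref{lemma:last-properties}. Two small points are left implicit that the paper states explicitly: in the subcase $0 < \delta(\alpha) < \om$, $\delta(\s{\rl{t}}{\alpha}) < \om$, you silently replace $\delta(\s{\rl{t}}{\alpha}) \ominus 1$ by $\delta(\s{\rl{t}}{\alpha}) - 1$, which requires observing that $\delta(\s{\rl{t}}{\alpha}) > 0$ (this follows since $f(0) = 0 < \delta(\alpha) \le f(\delta(\s{\rl{t}}{\alpha}))$); and in the subcase $\delta(\alpha) = \om$, $\delta(\s{\rl{t}}{\alpha}) < \om$, you need \cref{c:last2} to conclude $\delta(\last(\rl{t})) < \om$ before the ``previous cases'' can legitimately be applied at $\delta(\last(\rl{t}))$.
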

\begin{proof}
Let $\s{\rl{t}}{\alpha} \in \Delta$. Note first that, by \cref{c:zero}, if $\delta(\alpha) = 0$, then $\delta(\s{\rl{t}}{\alpha}) = 0=\rl{f}(0)$. Hence assume that $\delta(\alpha)> 0$. Suppose first that $\delta(\alpha) <\om$. We reason by cases for $\delta(\s{\rl{t}}{\alpha})$.
  
  \begin{enumerate}
  \item $\delta(\s{\rl{t}}{\alpha})  <\om$. Then $\delta(\s{t}{\pre(\s{\rl{t}}{\alpha})}) <\delta(\alpha) \leq \delta(\s{t}{\s{\rl{t}}{\alpha}})$, by \cref{c:l-finite1} and \cref{c:l-finite2}. Since $f$ extends~$\diag{t}{\delta}$, we have $f(\delta({\pre(\s{\rl{t}}{\alpha})}))=\delta(\s{t}{\pre(\s{\rl{t}}{\alpha})})<\delta(\alpha)\le\delta(\s{t}{\s{\rl{t}}{\alpha}})= f(\delta(\s{\rl{t}}{\alpha}))$. But also $f(\delta({\pre(\s{\rl{t}}{\alpha})})) = f(\delta(\s{\rl{t}}{\alpha})-1)$, by \cref{c:pre}, noting that  $0<\delta(\s{\rl{t}}{\alpha})$, since $f(0) = 0 < \delta(\alpha)$. Hence $\delta(\s{\rl{t}}{\alpha}) =\rl{f}(\delta(\alpha))$, by \Cref{lemma:op-l}. 
   
   \item $\delta(\s{\rl{t}}{\alpha}) = \om$. Then $\delta(\s{t}{\s{\rl{t}}{\alpha}}) < \delta(\alpha)$, by \cref{c:l-inf}, and, since $f$ extends~$\diag{t}{\delta}$, also $f(\om) =  f(\delta(\s{\rl{t}}{\alpha})) < \delta(\alpha)$. Hence $\rl{f}(\delta(\alpha)) = \om$, by \Cref{lemma:op-l}.
  \end{enumerate}
  
  \noindent 
  Suppose now that $\delta(\alpha) = \om$ and hence $\delta(\s{\rl{t}}{\alpha}) =\delta(\s{\rl{t}}{\last(\rl{t})})$. We reason by cases on $\delta(\s{\rl{t}}{\alpha})$.
  
  \begin{enumerate}
  \item $\delta(\s{\rl{t}}{\last(\rl{t})}) =\delta(\s{\rl{t}}{\alpha})  <\om$. Then $\delta(\last(\rl{t})) <\om$ and $\delta(\s{t}{\s{\rl{t}}{\last(\rl{t})}}) = \om$, by \cref{c:last2} and \cref{c:l-last-value-finite}. So, by the previous cases and the fact that $f$ extends~$\diag{t}{\delta}$, we have $\rl{f}(\delta(\last(\rl{t}))) = \delta(\s{\rl{t}}{\alpha})$ and $f(\delta(\s{\rl{t}}{\alpha})) = \om$. Hence $\rl{f}(\delta(\alpha)) = \delta(\s{\rl{t}}{\alpha})$, by \Cref{lemma:op-l}.
  
  \item $\delta(\s{\rl{t}}{\last(\rl{t})}) =\delta(\s{\rl{t}}{\alpha})  =\om$. Then there are two cases. If $\delta(\last(\rl{t})) <\om$, then by the previous cases, $\rl{f}(\delta(\last(\rl{t}))) = \om$, and $\rl{f}(\om) = \om$ by \Cref{lemma:op-l}. If $\delta(\last(\rl{t})) = \om$, then $\delta(\last(t)) = \om$, by \cref{c:l-last-inf}, and, since $f$ strongly extends~$\diag{t}{\delta}$, also $\lastw(f) = \om$ and $\rl{f}(\om) = \om$, by \Cref{lemma:op-l}.
  \end{enumerate}
  It remains to show that the extension is strong. Again we can assume that $\diag{\rl{t}}{\delta}$ is non-empty. Suppose that $\delta(\last(\rl{t}))  =\om$. Then, by \cref{c:l-last-inf}, we get $\delta(\last(t)) = \om$. So, since $f$ strongly extends~$\diag{t}{\delta}$, also $\lastw(f) = \om$. Hence $\lastw(\rl{f}) = \om$, by \Cref{lemma:last-properties}. 
\end{proof}

\begin{lem}
  \label{lemma:fundamental-id}
  The time warp~$\id$ strongly extends~$\diag{\id}{\delta}$.
\end{lem}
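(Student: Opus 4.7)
The plan is to verify the two conditions in the definition of ``strongly extends'' directly, using only condition~\cref{c:id} of the definition of a diagram and the definition of $\lastw$. Both steps are immediate.

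First, I would unpack $\diag{\id}{\delta} = \{(\delta(\alpha), \delta(\s{\id}{\alpha})) \mid \s{\id}{\alpha} \in \Delta\}$ and invoke~\cref{c:id}, which asserts that $\delta(\s{\id}{\alpha}) = \delta(\alpha)$ for every $\s{\id}{\alpha} \in \Delta$. Hence every pair in $\diag{\id}{\delta}$ has the form $(\delta(\alpha),\delta(\alpha))$, so $\id(i) = i = j$ for each $(i,j) \in \diag{\id}{\delta}$, which is precisely what it means for $\id$ to extend $\diag{\id}{\delta}$.

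For the strong extension clause, note that we may assume $\diag{\id}{\delta} \ne \emptyset$, and the condition then requires that $\delta(\last(\id)) = \om$ implies $\lastw(\id) = \om$. But directly from the definition of $\lastw$ we have
\[
\lastw(\id) = \bigwedge\{p \in \som \mid \id(p) = \id(\om)\} = \bigwedge\{p \in \som \mid p = \om\} = \om,
\]
so the implication holds trivially. There is no genuine obstacle in this lemma; its only role is to discharge the base case $t = \id$ in the induction of~\Cref{lemma:fundamental}.
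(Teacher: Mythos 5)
Your proof is correct and follows the same route as the paper: invoke condition~\cref{c:id} to get the extension property, and observe $\lastw(\id)=\om$ to make the strong-extension implication vacuous. Your version just spells out the two one-line observations that the paper states tersely.
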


\begin{proof}
  The extension property follows from~\cref{c:id}; the fact that it is strong follows from the fact that~$\lastw(\id) = \om$. 
\end{proof}

\begin{lem}
  \label{lemma:fundamental-bot}
  The time warp~$\bot$ strongly extends~$\diag{\bot}{\delta}$.
\end{lem}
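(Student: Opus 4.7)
The plan is to handle the two parts of strong extension separately, exploiting the fact that the constant-zero time warp $\bot$ collapses everything. First I would observe that when $\diag{\bot}{\delta} = \emptyset$ both conditions hold trivially, so we may assume $\diag{\bot}{\delta} \neq \emptyset$; pick any $\s{\bot}{\alpha} \in \Delta$. By the saturation rule $\s{t}{\alpha} \leadsto \s{t}{\last(t)}$ we get $\s{\bot}{\last(\bot)} \in \Delta$, which triggers \cref{c:bot} to give $\delta(\last(\bot)) = 0$.

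Next I would establish the extension property: for every $\s{\bot}{\alpha} \in \Delta$ we need $\bot(\delta(\alpha)) = \delta(\s{\bot}{\alpha})$, i.e., $\delta(\s{\bot}{\alpha}) = 0$. Using $\delta(\last(\bot)) = 0 \le \delta(\alpha)$ together with \cref{c:last} yields $\delta(\s{\bot}{\alpha}) = \delta(\s{\bot}{\last(\bot)})$, and the right-hand side equals $0$ by \cref{c:zero} applied to $\s{\bot}{\last(\bot)}$. Since $\bot(\delta(\alpha)) = 0$ by definition, this settles the extension.

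Finally, for strong extension I would note that the implication ``$\delta(\last(\bot)) = \om \Rightarrow \lastw(\bot) = \om$'' is vacuously satisfied: we just proved $\delta(\last(\bot)) = 0 \neq \om$ in the non-empty case. (Alternatively, one could note that $\lastw(\bot) = 0$ anyway, since $\bot$ is already constant, so the implication is immediate when the hypothesis fails.) There is no real obstacle here — the statement is essentially a sanity check that the diagram conditions were designed so as to force $\bot$-samples to behave like the constant-zero function.
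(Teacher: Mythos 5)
Your proof is correct and follows essentially the same route as the paper: use \cref{c:bot} to get $\delta(\last(\bot))=0$, then \cref{c:last} to collapse all samples $\s{\bot}{\alpha}$ to $\delta(\s{\bot}{\last(\bot)})$, and note strong extension is vacuous since $\delta(\last(\bot))\neq\om$. The only difference is that you make explicit the appeal to \cref{c:zero} to pin down $\delta(\s{\bot}{\last(\bot)})=0$, a step the paper's one-line proof leaves implicit; your version is the more complete one. (Your parenthetical ``alternative'' via $\lastw(\bot)=0$ does not actually offer an independent route to strong extension, since the implication still needs $\delta(\last(\bot))\neq\om$, i.e.\ \cref{c:bot}, to be true; but since you already established that, this is harmless.)
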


\begin{proof}
  The extension property follows from~\cref{c:last}  and~\cref{c:bot}; the fact that it is strong is immediate by~\cref{c:bot}.
  \end{proof}
  

\end{document}